\documentclass[11pt]{amsart}
\usepackage{amsmath,amsthm,amssymb,amsfonts,amscd}
\usepackage{amsxtra}
\usepackage{eucal}
\usepackage{mathrsfs}
\usepackage{color, graphics}
\usepackage[all]{xy}
\usepackage{hyperref}
\usepackage{pstricks,pst-plot}
\usepackage{cases}
\textwidth=14.00cm \textheight=22cm \topmargin=0.00cm
\headsep=1cm \numberwithin{equation}{section}
\hyphenation{semi-stable} \emergencystretch=10pt
\SelectTips{cm}{11}

\newcommand \Bl {\ensuremath{\mathrm{Bl}}}

\newcommand \ini {\ensuremath{\mathrm{in}}}
\newcommand \Tor {\ensuremath{\mathrm{Tor}}}
\newcommand \Trisec {\ensuremath{\mathrm{Trisec}}}

\newcommand \reg {\mathrm {reg}}
\newcommand \Sec {\ensuremath{\mathrm{Sec}}}

\newcommand \Sym {\ensuremath{\mathrm{Sym}}}

\newcommand \codim {\ensuremath{\mathrm{codim}}}

\newcommand \depth {\ensuremath{\mathrm{depth}}}
\newcommand \pd {\ensuremath{\mathrm{pd}}}
\def\P{{\mathbb P}}
\def\Z{{\mathbb Z}}
\def\N{{\mathbb N}}

\newcommand \st[1] {\stackrel{#1}{\longrightarrow}}
\newcommand \sts[1] {\stackrel{#1}{\rightarrow}}
\newcommand \iso {\stackrel{\sim}{\rightarrow}}
\newcommand \isost[1] {\stackrel{#1}{\iso}}
\newcommand \sst[1] {\stackrel{#1}{\twoheadrightarrow}}
\newcommand \ist[1] {\stackrel{#1}{\hookrightarrow}}

\newcommand \lra {\rightarrow}
\newcommand \llra {\longrightarrow}
\newcommand \p {\partial}
\newcommand\keyword[1]{\par{\bf {\it Keywords:~}}{#1}}

\newcommand \rrlap[1]{\hbox to 0pt{#1}}

\theoremstyle{theorem} 
\newtheorem{Thm}{Theorem}[section]
\newtheorem{Prop}[Thm]{Proposition}
\newtheorem{Coro}[Thm]{Corollary}

\theoremstyle{definition}
\newtheorem{Def}[Thm]{Definition}
\newtheorem{Ex}[Thm]{Example}
\newtheorem{Qu}[Thm]{Question}

\newtheorem{Remk}[Thm]{Remark}

\newtheorem{Obs}[Thm]{Observation}

\numberwithin{equation}{section}

\begin{document}

\title{Analysis on some infinite modules, inner projection, and applications}
\author[K.\ Han and S.\ Kwak]{Kangjin Han and Sijong Kwak}
\address{Algebraic Structure and its Applications Research Center(ASARC), Department of Mathematics, Korea Advanced Institute of Science and Technology,
373-1 Gusung-dong, Yusung-Gu, Daejeon, Korea}
\email{han.kangjin@kaist.ac.kr}
\address{Department of Mathematics, Korea Advanced Institute of Science and Technology,
373-1 Gusung-dong, Yusung-Gu, Daejeon, Korea}
\email{skwak@kaist.ac.kr}
\thanks{2010 \textit{Mathematics Subject Classification.} Primary 14N05, 13D02, 14N25; Secondary 51N35.\\
The authors were supported by Basic Science Research Program through the National Research Foundation
of Korea(NRF) funded by the Ministry of Education, Science and Technology(grant No.2009-0063180)}


\maketitle
\begin{center}
\textit{Dedicated to the memory of Hyo Chul Myung\\
(June 15, 1937 - February 11, 2010)}
\end{center}
\bigskip

\begin{abstract}

A projective scheme $X$ is called `quadratic' if $X$ is scheme-theoretically cut out by homogeneous equations of degree $2$. Furthermore, we say $X$ satisfies `property $\textbf{N}_{2,p}$' if it is quadratic and the quadratic ideal has only linear syzygies up to first $p$-th steps. In the present paper, we compare the linear syzygies of the inner projections with those of $X$ and obtain a theorem on `embedded linear syzygies' as one of our main results. This is the natural projection-analogue of `restricting linear syzygies'
in the linear section case, \cite{EGHP1}. As an immediate corollary, we show that the inner projections of $X$ satisfy property $\textbf{N}_{2,p-1}$
for any reduced scheme $X$ with property $\textbf{N}_{2,p}$.

Moreover, we also obtain the neccessary lower bound $(\codim X)\cdot p -\frac{p(p-1)}{2}$
, which is sharp, on the number of quadrics vanishing on $X$ in order to satisfy $\textbf{N}_{2,p}$ and show that the arithmetic depths of inner projections are equal to that of the quadratic scheme $X$. These results admit an interesting `syzygetic' rigidity theorem on property $\textbf{N}_{2,p}$ which leads the classifications of extremal and next to extremal cases.

For these results we develope the elimination mapping cone theorem for infinitely generated
graded modules and improve the partial elimination ideal theory initiated by M. Green. This new method allows us to treat a wider class of 
projective schemes which can not be covered by the Koszul cohomology techniques, because these are not projectively normal in general.
 
\bigskip
\noindent\keyword{linear syzygies, the mapping cone theorem, partial elimination ideals, inner projection, arithmetic depth, Castelnuovo-Mumford regularity.}
\end{abstract}

\tableofcontents \setcounter{page}{1}

\section*{Introduction}\label{section_1}
Let $X$ be a nondegenerate reduced closed subscheme in a projective space $\mathbb P^N$ over an algebraically
closed field $k$ of characteristic zero and $R=k[x_0,\ldots,x_N]$ be the coordinate ring of $\P^N$. The equations defining $X$ and the syzygies among them have played a central role to study projective schemes in algebraic geometry. Further the syzygy structures and their geometric implications have been intensively focused for the most interesting case, i.e. projective schemes having property 
$\textbf{N}_{2,p}$ for last twenty years, see \cite{CKK,EGHP1,EGHP2,EHU,GL}.
They are closely related to the Eisenbud-Goto conjecture on Castelnuovo-Mumford regularity and other conjectures on linear syzygies
in classical algebraic geometry. The linear sections and projections of $X$ have been very useful to understand those problems.

For the linear sections, we have interesting results on `Restricting linear syzygies' due to Eisenbud, Green, Hulek, Popescu, see \cite{EGHP1}. Along this line, a natural question could be raised:

\begin{center}
\textit{What is the relations between the syzygies of projections and $X$?}\\
\end{center}

In the present paper, we especially consider the relations between the linear syzygies of \textit{inner} projections and those of $X$. Note that the inner projection has been a standard issue classically since del Pezzo and Fano used this projection for the classification of del Pezzo surfaces and Fano 3-folds, see \cite{R}. There are also some known results about non-birational loci of these projection morphisms and geometric structures of the projection images, see 
\cite{BHSS,CC,Se,So}.\\

{\textbf{Problems}} We list our main problems in detail:
\begin{enumerate}
\item [(a)](Embedded linear syzygies) Let $X$ be a nondegenerate reduced scheme in $\P^N$
satisfying property $\textbf {N}_{2,p}, (p\ge 1)$. Consider the linear projection from a linear subvariety $\Lambda\subset \P^N$ of $\dim \Lambda=t < p$ with $\Lambda\cap X\neq \emptyset$, $\langle \Lambda\cap X\rangle =\Lambda$ and $X_\Lambda=\overline{\pi_{\Lambda}(X\setminus \Lambda)}$
in $\P^{N-t-1}$. How do the syzygies behave under projections? D. Eisenbud et. al. showed that under some $\textbf{N}_{2,p}$-assumption, the syzygies of $X$
restrict surjectively to the syzygies of linear sections in their paper `Restricting linear syzygies', \cite{EGHP1}. Is there any natural projection-analogue of the linear section case? Bearing on this problem, we also expect that $X_\Lambda$ satisfies property $\textbf {N}_{2, p-t-1}$.
  
\item [(b)](Necessary lower bound for property $\textbf {N}_{2, p}$) For a quadratic scheme $X$ satisfying $\textbf{N}_{2,p}$, it is roughly believed that the more quadratic equations $X$ has, the further steps linear syzygies proceed to. Therefore one can ask `how many quadrics does $X$ require to satisfy property $\textbf{N}_{2,p}$?' This is a natural question, but not yet known.

\item [(c)](Syzygetic rigidity theorem) In \cite{EGHP1,EGHP2} they also show that a closed subscheme $X\subset \P^N$ is $2$-regular if $X$ satisfies property $\textbf {N}_{2, \codim X}$ and characterize all $2$-regular algebraic sets geometrically for this extremal case. What about `next to extremal case', i.e. a scheme $X$ satisfying $\textbf {N}_{2, \codim X -1}$? How to classify or characterize them in a suitable category?
\end{enumerate}

For those problems, we develope the elimination mapping cone theorem for infinitely generated
graded modules and improve the partial elimination ideal theory initiated by M. Green for the inner projection. This allows us to treat a wider class of 
projective schemes which can not be covered by the Koszul cohomology techniques, because these are not projectively normal in general. We have also found it very interesting to understand some relations between the syzygies of its projections and those of $X$ as we move the center of projection.\\

{\bf Organization of the paper} We recall basic definitions and preliminaries in Section \ref{section_1.5}. In Section \ref{section_2}, we set up the elimination mapping
cone construction for infinitely generated graded modules and the partial elimination ideal theory for inner projection which are crucial to understand
the syzygy structures of infinitely generated graded modules. This partial elimination ideal theory gives us \textit{local}
information of $X$ near the center of projection $q\in X$ which turns out to govern syzygies and other properties of the inner projection $X_q$ from the (\textit{global})
homogeneous equations.

In Section \ref{section_3}, we obtain some results on syzygy structures and geometric properties of {\it inner} projections,
i.e. embedded linear syzygies, the number of quadratic equations, and their corollaries. In particular, we can show that for any projective reduced scheme $X$ satisfying property $\textbf{N}_{2,p}$ the inner projection from any smooth point satisfies at least property $\textbf{N}_{2,p-1}$ and $X_\Lambda$ satisfies at least $\textbf{N}_{2,p-t-1}$ for a \textit{general} $t$-dimensional linear subspace $\Lambda$ with $\dim X\cap\Lambda=0$ (see Corollary \ref{Main result N_{2,p}} and Remark \ref{remk_for_N2,p}). We also give the neccessary lower bound $(\codim X)\cdot p -\frac{p(p-1)}{2}$ on the number of quadrics vanishing on $X$ in order to satisfy property $\textbf{N}_{2,p}$, which is sharp.

In Section \ref{section_3.5} we prove that the arithmetic depths of inner projections are equal to that of the given quadratic scheme. Combined with results in the previous section, this $depth$ theorem leads us to a very interesting `syzygetic' rigidity theorem on property $\textbf{N}_{2,p}$ in the category of varieties, namely, for the extremal (i.e. $p=\codim X$) and next to extremal (i.e. $p=\codim X -1$) cases those varieties should be \textit{arithmetically Cohen-Macaulay (abbr. ACM)} and we can give the classfications of the two cases. We also extend this result to more general category (See Corollary \ref{rigidity_2} and Question \ref{Q_classify}). Finally, in Section \ref{section_5} we see some examples and open questions stimulating further work.\\

{\bf Acknowledgements} The first author would like to thank Professor Frank-Olaf Schreyer for hosting his visit to Saarbr\"ucken under KOSEF-DAAD Summer Institute Program and for many valuable comments preparing this paper. The second author would like to thank Professor B. Sturmfels, M. Brodmann for their useful comments, especially P. Schenzel for valuable discussion and Example ~\ref{schenzel} during their stay in Korea Institute of Advanced Study(KIAS) and KAIST, Korea in the Summer 2009. We would also like to thank Professor F. Zak who informed us of Professors A. Alzati and J.C. Sierra's recent paper \cite{AS} related to our paper (see Remark \ref{Al-Si}).

\section{Definitions and Preliminaries}\label{section_1.5}

We work over an algebraically
closed field $k$ of characteristic zero. Let $X$ be a nondegenerate reduced closed subscheme in a projective space $\mathbb P^N$.

\begin{Def}\label{basic_def}
Let $X$ be as above.
\begin{itemize}
\item[(a)] $X$ is said to be a \textit{quadratic scheme} if there is a homogeneous ideal $I$ generated by equations of degree $2$ which defines $X$ scheme-theoretically (i.e. its sheafification $\widetilde{I}$ is equal to the ideal sheaf $\mathcal I_X$ of $X$).
\item[(b)] $X$ is said to \textit{satisfy property} $\textbf{N}_{2,p}$ \textit{scheme-theoretically} if it is quadratic and the quadratic ideal $I$ has only linear syzygies at least up to first $p$-th steps.
\item[(c)] $X$ is said to be \textit{$m$-regular} if $H^i(\mathcal I_X (m-i))=0 \text{ for all }i\geq 1$. We call $\reg(X):=\min\{\,m\,|\, H^i(\mathcal I_X (m-i))=0 \text{ for all }i\geq 1\}$ \textit{Castelnuovo-Mumford regularity of $X$}.
\end{itemize}
\end{Def}

Note that Definition \ref{basic_def} (b) is a generalization of known notions. It is the same as property $\textbf{N}_p$ defined by Green-Lazarsfeld if $X$ is projectively normal and $I$ is saturated (see \cite{GL}) .\\

Let $\pi_{\Lambda}:X\subset \P^N\dashrightarrow \P^{N-t-1}$ denote the projection of $X$ from a linear space $\Lambda=\P^{t}$.
We call it either {\it outer} projection if $X\cap \Lambda=\emptyset$ or {\it inner} projection in case $\Lambda\subset X$.
Every projection $\pi_{\Lambda}$ can be regarded as succesive compositions of suitable outer and inner projections from points.
These projections as well as blow-ups have been very useful projective techniques in algebraic geometry. We briefly review the preliminaries about an inner projection from a point $q\in X$.\\

Let $\sigma : \widetilde{X}\to X$ be a blowing up of $X$ at a smooth point $q\in X$. One has the regular morphism
$\pi^{'}:\widetilde{X}\twoheadrightarrow{X_q}:=\overline{\pi_q(X\setminus \{q\})}\subset \P^{N-1}$ with the following diagram;

\begin{equation*}\label{inner-diagram}
    \xymatrix @!=2pc{
 &&&\P^N\times\P^{N-1}\supset\widetilde X\phantom{XXXXXXX}\ar[dr]^{\pi'}
 \ar[dl]_{\sigma}\\
  &&**[l]\P^N\supset X\ar@{-->}[rr]_{\pi_q }&&**[r]X_q
  =\overline{\pi_q(X\setminus q)}\subset \P^{N-1}}
\end{equation*}

Classically, one says that a smooth variety $X$ {\it{admits an inner projection}} if $\pi^{'}$ is an embedding for some point $q\in X$. This is equivalent to $q\in X\setminus{\Trisec}(X)$
where $\Trisec(X)$ is the union of all lines $\ell$ with the condition that $\ell\subset X$ or $X\cap\ell$ is a subscheme of length at least $3$. We also know that the exceptional divisor $E$ is linearly
embedded via $\pi^{'}$ in $\P^{N-1}$ (i.e. $\pi^{'}(E)= \P^{r-1}\subset\P^{N-1}$, $r=\dim X$) for any subvariety $X$ if the center $q$ is smooth (see \cite{B,FCV}).\\

Let $R=k[x_0,\ldots,x_N]$ and $S=k[x_1,x_{2}\ldots,x_N]$ be the homogeneous coordinate rings of $\P^N$ and $\P^{N-1}$. Assume $q=(1:0:\ldots:0)\in X$ (by suitable coordinate change). Let $I$ be an ideal of $R$ defining a reduced scheme $X$ scheme-theoretically. Naturally we can give a scheme structure on the image $X_q$ by the ideal $J:=I\bigcap S$. Note that the ideal $J$ is reduced if $I$ is reduced.\\

In case of inner projection we note that $R/I$ is not finitely generated $S$-module, because $q\in X$ and there is no polynomial of the form
$f={x_0}^{n} + (\text {\,other \,\,terms\,})$ for some $n\in\N$ in the ideal $I$, even though $R/I$ is finitely generated as $R$-module. $I$ is also an \textit{infinitely generated} graded $S$-module with the following resolution :
\[
\begin{array}{cccccccccccccccccccc}
&&&&\cdots \lra & \bigoplus_{j=2}^{\infty} S(-i-j)^{\beta_{i,j}}&\lra\cdots\lra&\bigoplus_{j=2}^{\infty} S(-j)^{\beta_{0,j}}&\lra&I&\lra&0 ~.
\end{array}
\]

In Section \ref{section_2}, we show that they have interesting syzygy structures as $S$-modules (see Proposition \ref{linear} and Remark \ref{commute}).\\

On the other hand, if $X$ is quadratic, then we can write the quadratic ideal $I$ as
\begin{equation}\label{write_I}
~~~ I=(x_0 \ell_1-Q_{0,1},\ldots,x_0 \ell_t-Q_{0,t},Q_1,\ldots,Q_s), ~~q=(1,0,\ldots,0)\in X
\end{equation}
where $\ell_i$ is a linear form, $Q_{0,i},Q_j$ are quadratic forms in
$S=k[x_1,\ldots,x_N]$ and they are minimal generators. We can also assume all $\{\ell_i\}$ are linearly independent, and all $\{Q_{0,i}\}$ are
distinct. Clearly, $\{\ell_i\}$ generate $(T_{q}X)^{*}$. Note that $t=\codim(X)=N-\dim X$ if $q$ is a smooth point.
In general, $t$ is equal to $N-\dim T_{q}X$.\\

{\bf Convention} We are working on the following convention:
\begin{itemize}
\item Let $X\subset\P^N$ and $q\in X$ be as above and $I$ be a homogeneous defining ideal of $X$. We denote the $S$-ideal $I\cap S$ by $J$ which gives the natural induced scheme structure on the projection image $X_q\subset\P^{N-1}$ and call $J$ \textit{the $x_0$-elimination ideal of $I$}. In addition, we write the saturated ideal defining $X$ (resp. $X_q$) as $I_X$ (resp. $I_{X_q}$).
\item (Betti numbers) For a graded $R$-module $M$, we define \textit{graded Betti numbers $\beta^R_{i,j}(M)$ of $M$} by $\dim_k\Tor_i^{R}(M,k)_{i+j}$. We consider  $\beta^S_{i,j}(N)$ for any graded $S$-module $N$ in the same manner. We remind readers that $\Tor^R_i(R/I,k)_{i+j}=\Tor^R_{i-1}(I,k)_{i-1+j+1}$. So $\beta^R_{i,j}(R/I)=\beta^R_{i-1,j+1}(I)$.
We write $\beta^R_{i,j}(M)$ as $\beta_{i,j}(M)$ or $\beta_{i,j}$ if it is obvious.
\item We often abbreviate $\Tor_i^{R}(M,k)_{i+j}$ as $\Tor_i^{R}(M)_{i+j}$ (same  for $S$-module $\Tor$).
\item (Arithmetic depth) When we refer the \textit{depth of $X$}, denoted by $\depth_R(X)$, we mean the arithmetic depth of $X$, i.e. $\depth_{R}(R/I_X)$.
\end{itemize}

\section{Elimination mapping cone construction and Partial elimination ideals}\label{section_2}
In general the mapping cone of the chain map between
two complexes is a kind of natural extension of complexes induced by the
given chain map. Now we construct some graded mapping cone which we call `Elimination mapping cone'. This is naturally related to projections and very useful to understand the syzygies of projections. Another ingredient is the partial elimination ideal theory. Let
us construct the graded mapping cone theorem and consider the partial elimination ideal theory from a viewpoint of inner projections.
\\
\paragraph*{\textbf{Elimination Mapping Cone Construction}}\label{MC}
Let $W=k\langle x_1,\cdots,x_N\rangle \subset V=k\langle x_0,\cdots,x_N\rangle$ be vector spaces over
 $k$ and $S=\Sym(W)=k[x_1,\ldots,x_N]\subset R=\Sym(V)=k[x_0,\ldots,x_N]$ be polynomial rings.
\begin{itemize}
\item  $M$ : a graded $R$-module given a degree 1 shifting map by $\mu$\\ (i.e. $\mu : M_{i}\rightarrow M_{i+1}$)
\item $\Bbb G_{\ast}$(resp. $\Bbb F_{\ast}$) : the graded Koszul complex of $M$, $K_{\ast}^{S}(M)$\\ (resp. $M[-1]$, $K_{\ast}^{S}(M[-1])$) as
follows:
 $$0\rightarrow \wedge^N W\otimes {M}\rightarrow\cdots\rightarrow\wedge^2 W\otimes {M}\rightarrow W\otimes {M}\rightarrow M\rightarrow 0$$
 whose graded components $(\Bbb G_{i})_{i+j}$ are $K_{i}^{S}(M)_{i+j} = \wedge^i W\otimes {M_j}$ \\
 (resp. $(\Bbb F_{i})_{i+j}=\wedge^i W\otimes {M_{j-1}}$).
\item Then $\mu : M_{i}\rightarrow M_{i+1}$ induces the chain map\\ $\bar{\mu}: \Bbb F_{\ast}=K_{\ast}^{S}(M[-1]) \longrightarrow \Bbb G_{\ast}=K_{\ast}^{S}(M)$ of degree 0.
\end{itemize}

Now we construct the mapping cone $(\Bbb{C}_{\bar{\mu}},
\verb"d"_{\bar{\mu}})$ such that
\begin{equation}\label{exact_MC}
0\longrightarrow \Bbb G_{\ast} \longrightarrow
(\Bbb{C}_{\bar{\mu}})_{\ast} \longrightarrow \Bbb
F_{\ast}[-1]\longrightarrow 0\, ,
\end{equation}
where \,$\Bbb{C}_{\bar{\mu}}$ is a direct sum $\Bbb G_{\ast}\bigoplus\Bbb
F_{\ast}[-1]$ and the differential
$\verb"d"_{\bar{\mu}}$ is given by
\[(\verb"d"_{\bar{\mu}})_{\ast}=\left(%
\begin{array}{cc}
  \p_{\Bbb G} & (-1)^{\ast+1}\bar{\mu} \\
  0 & \p_{\Bbb F} \\
\end{array}%
\right),\] where $\p$ is the differential of Koszul complex. From the construction, it can be checked that we have the following
isomorphism (see \cite{AK}):
$$\Tor_i^R(M)_{i+j}\simeq H_i((\Bbb{C}_{\bar{\mu}})_{\ast})_{i+j}\,.$$

Suppose $M$ is a graded $R$-module which is also a graded $S$-module. Consider a multiplication map $\mu = \times x_0$ as a naturally
given degree 1 shifting map on $M$. In this case, the long exact sequence on homology groups induced from (\ref{exact_MC}) is important
and very useful to study the syzygies of projections. Note that in general we can define \textit{property $\textbf{N}_{d,p}$} similarly (i.e. $\beta^R_{i,j}(R/I)=0$ for $any~0\leq i\leq p,~ j\geq d$).

\begin{Thm}{\bf{(Elimination mapping cone sequence)}}\\
Let $S=k[x_1,\ldots,x_N]\subset R=k[x_0,x_1\ldots,x_N]$ be two polynomial rings.
\begin{itemize}
\item[(a)]\label{MC_seq} Let $M$ be a graded $R$-module which is not necessarily finitely generated. Then, we have a natural long exact sequence:
\begin{equation*}
\begin{array}{cccccccc}
\cdots\Tor_{i}^R(M)_{i+j}\lra
\Tor_{i-1}^{S}(M)_{i-1+j}
\sts{\bar{\mu}}\Tor_{i-1}^{S}(M)_{i-1+j+1}\lra\Tor_{i-1}^R(M)_{i-1+j+1}\cdots
\end{array}
\end{equation*}
whose connecting homomorphism $\bar{\mu}$ is induced by the multiplicative map $\times x_0$.
\item[(b)]\label{betti_MC} Assume that $R/I$ satisfies  property $\textbf{N}_{d,p}$ for some $d\ge 2, p\geq 1$. Then a multiplication by $x_0$ induces a sequence of
isomorphisms on $\Tor_{i}^{S}(I)_{i+j}$ for $0\leq i\leq p-2, ~~j\geq d+1$ and a surjection for $j=d$;
$$\cdots\sts{\times x_0}\Tor_{i}^{S}(I)_{i+d}\sst{\times x_0}\Tor_{i}^{S}(I)_{i+d+1}\isost{\times x_0}\Tor_{i}^{S}(I)_{i+d+2}\isost{\times x_0}\cdots.$$
For $i=p-1$, we have a sequence of surjections from $j=d$
$$\cdots\sts{\times x_0}\Tor_{p-1}^{S}(I)_{p-1+d}\sst{\times x_0}\Tor_{p-1}^{S}(I)_{p-1+d+1}\sst{\times x_0}\Tor_{p-1}^{S}(I)_{p-1+d+2}\sst{\times x_0}\cdots$$
\end{itemize}
\end{Thm}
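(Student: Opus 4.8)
The plan is to obtain both parts from the short exact sequence of complexes (\ref{exact_MC}) together with the identification $H_i((\Bbb C_{\bar\mu})_{\ast})_{i+j}\simeq\Tor_i^R(M)_{i+j}$ recorded above.

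For part (a), I would take the long exact homology sequence attached to
\[
0\llra \Bbb G_{\ast} \llra (\Bbb C_{\bar\mu})_{\ast} \llra \Bbb F_{\ast}[-1]\llra 0 ,
\]
and identify each family of homology groups. Since $\Bbb G_{\ast}=K_\ast^S(M)$ is $M$ tensored with the Koszul resolution of $k$ over $S$, its homology is $H_i(\Bbb G_{\ast})_{i+j}=\Tor_i^S(M)_{i+j}$; similarly $H_i(\Bbb F_{\ast})_{i+j}=\Tor_i^S(M[-1])_{i+j}=\Tor_i^S(M)_{i+j-1}$, and the homological shift gives $H_i(\Bbb F_{\ast}[-1])=H_{i-1}(\Bbb F_{\ast})$. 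The cone homology is $\Tor_i^R(M)$ by the stated isomorphism. Because (\ref{exact_MC}) is strict for the internal grading, the long exact sequence respects internal degrees, and substituting the three identifications reproduces the asserted six-term pattern degree by degree. It is worth stressing that none of this requires $M$ to be finitely generated --- Koszul homology computes $\Tor$ for arbitrary graded modules --- which is precisely the generality we need.

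The one substantive point in (a) is the connecting homomorphism. For the mapping cone of a chain map $\bar\mu:\Bbb F_{\ast}\to\Bbb G_{\ast}$, the boundary operator of the associated homology sequence is, up to sign, the map induced by $\bar\mu$; and $\bar\mu$ is by construction the chain map coming from the degree-one shift $\mu=\times x_0$ on $M$. Carrying the $M[-1]$-bookkeeping through, the induced boundary map becomes $\Tor_{i-1}^S(M)_{i-1+j}\st{\times x_0}\Tor_{i-1}^S(M)_{i-1+j+1}$, i.e. multiplication by $x_0$ raising internal degree by one, exactly as claimed. I expect this identification of the connecting map with $\times x_0$, together with keeping the grading shift straight, to be the only delicate step; the rest is the formal homology sequence of a cone.

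For part (b), I would specialize (a) to $M=I$ and isolate the four-term fragment
\[
\Tor_{i+1}^R(I)_{(i+1)+j}\llra \Tor_i^S(I)_{i+j}\st{\times x_0}\Tor_i^S(I)_{i+j+1}\llra \Tor_i^R(I)_{i+(j+1)} .
\]
By exactness, $\times x_0$ is injective once $\Tor_{i+1}^R(I)_{(i+1)+j}=0$ and surjective once $\Tor_i^R(I)_{i+(j+1)}=0$. Now property $\textbf{N}_{d,p}$ of $R/I$, translated through $\beta_{i,j}^R(R/I)=\beta_{i-1,j+1}^R(I)$, says exactly that $\Tor_i^R(I)_{i+j}=0$ for $0\le i\le p-1$ and $j\ge d+1$. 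Feeding this in: for $0\le i\le p-2,\ j\ge d+1$ both conditions hold and $\times x_0$ is an isomorphism; for $0\le i\le p-2,\ j=d$ only the surjectivity condition ($j+1\ge d+1$) is guaranteed, giving a surjection; and for $i=p-1$ the injectivity condition would require vanishing of $\Tor_p^R(I)$, which $\textbf{N}_{d,p}$ does not control, so only surjectivity persists for every $j\ge d$. This is precisely the pair of displayed chains. Here the only care needed is the index bookkeeping, and the recognition that $i=p-1$ is exactly the threshold at which injectivity is lost.
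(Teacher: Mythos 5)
Your proposal is correct and follows essentially the same route as the paper: part (a) is exactly the long exact homology sequence of the displayed short exact sequence of complexes (which the paper delegates to Theorem 2.2 of \cite{AK}, but whose content is precisely the cone construction set up just before the statement), and part (b) is the same four-term exactness argument using the vanishing $\Tor_i^R(I)_{i+j}=0$ for $0\le i\le p-1$, $j\ge d+1$ supplied by property $\textbf{N}_{d,p}$. Your index bookkeeping, including the identification of the connecting map with $\times x_0$ and the observation that $i=p-1$ is where injectivity is lost, matches the paper's proof.
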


\begin{Remk}
J. Ahn and the second author pointed out that this graded mapping cone construction is closely related to outer projections (see \cite{AK}).
We remark here that this theorem is also true even for an {\it infinitely generated} $S$-module $M$ and relates the torsion module $\Tor^R(M)$
to the torsion module of $M$ as $S$-module. Therefore this gives us useful information about syzygies of {\it inner} projections.
\end{Remk}

\begin{proof} (a) follows from theorem 2.2 in \cite{AK}.
For a proof of (b), consider the mapping cone sequence of Theorem (\ref{MC_seq})
for $M=I$
$$\Tor_{i+1}^R(I)_{i+1+j}\lra
\Tor_{i}^{S}(I)_{i+j}\st{\times x_0}\Tor_{i}^{S}(I)_{i+j+1}\lra
\Tor_{i}^R(I)_{i+j+1}$$ Note that
$\Tor_{i}^R(I,k)_{i+j}=0$ for $0\le i\le p-1$ and $j\geq d+1$ by
assumption that $I$ has $\textbf{N}_{d,p}$ property as a $R$-module. So, we have an
isomorphism
\[\Tor_{i}^{S}(I,k)_{i+j}\isost{\times x_0}\Tor_{i}^{S}(I,k)_{i+j+1}\] for $0\le i\le p-2$, $\forall j\geq d+1$ and a surjection
for $j=d$.

In case $i=p-1$, we know $\Tor_{p-1}^R(I)_{p-1+j}=0$ for $j\geq d+1$ in the mapping cone sequence
$$\Tor_{p}^R(I)_{p+j}\lra\Tor_{p-1}^{S}(I)_{p-1+j}\st{\times x_0}\Tor_{p-1}^{S}(I)_{p-1+j+1}\lra \Tor_{p-1}^R(I)_{p-1+j+1}.$$
Therefore we get the desired surjections for $i=p-1$ case.
\end{proof}

\bigskip

\paragraph*{{\bf Partial Elimination Ideals under a Projection}} 
Mark Green introduced partial elimination ideals in his lecture note \cite{G}. For the degree lexicographic order, if $f\in
I_m$ has leading term $\ini(f)=x_0^{d_0}\cdots x_n^{d_n}$, we set
$d_0(f)=d_0$, the leading power of $x_0$ in $f$. Then we can give
the definition of partial elimination ideals as in the following.

\begin{Def}\label{PEI}
Let $I\subset R$ be a homogeneous ideal and let
\[\widetilde{K}_i(I)=\bigoplus_{m\ge 0}\big\{f\in I_{m}\mid d_0(f)\leq i\big\}.\] If $f\in \widetilde{K}_i(I)$, we may write
uniquely $f=x_0^i\bar{f}+g$ where $d_0(g)<i$. Now we define the ideal $K_i(I)$ in $S$ generated by the image of $\widetilde{K}_i(I)$
under the map $f\mapsto \bar{f}$ and we call $K_{i}(I)$ the \it{$i$-th partial elimination ideal of $I$}.
\end{Def}

\begin{Obs} We can observe some properties of these
ideals in the projection case.
\begin{itemize}
\item[(a)] $0$--th partial elimination ideal $K_{0}(I)$ of $I$ is
\begin{align*}
J:=I\cap S&=\bigoplus_{m\ge 0}\big\{f \in I_m \mid d_0(f)=0\big\}.
\end{align*}
Note that the ideal $J$ gives a scheme structure on the image $X_q$ naturally.
\item [(b)] $\widetilde{K}_{i}(I)$ is a natural filtration of $I$ with respect to $x_0$ which also induces a filtraton on $K_i(I)$'s :
\[J=\widetilde{K}_0(I)\subset \widetilde{K}_1(I)\subset \cdots \subset \widetilde{K}_i(I)\subset \cdots \subset \widetilde{K}_{\infty}(I)=I \]
\[J=K_0(I)\subset K_1(I)\subset \cdots \subset K_i(I)\subset \cdots \subset
S.\]

\item[(c)] $\widetilde{K}_{i}(I)$ is a finitely generated graded $S$-module and there is
a short exact sequence as graded $S$-modules
\begin{equation}\label{ses_PEI}
0\rightarrow \frac{\widetilde{K}_{i-1}(I)}{\widetilde{K}_{0}(I)}
\rightarrow
\frac{\widetilde{K}_{i}(I)}{\widetilde{K}_{0}(I)}\rightarrow
K_{i}(I)(-i)\rightarrow 0.
\end{equation}
\end{itemize}
\end{Obs}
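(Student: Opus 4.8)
The claim collects three assertions; parts (a) and (b) are essentially formal and I would dispose of them first, reserving the real work for the finite generation and the short exact sequence in (c). For (a), note that a homogeneous $f$ with $d_0(f)=0$ has its degree-lexicographically largest monomial free of $x_0$, and since a larger $x_0$-power means a larger monomial, \emph{every} monomial of $f$ is then free of $x_0$; hence $\widetilde{K}_0(I_X)=I_X\cap S$ and $K_0(I_X)=I_X\cap S=I_{X_q}$. The filtration inclusions $\widetilde{K}_{i-1}\subset\widetilde{K}_i$ in (b) are immediate from the definition; the only inclusion requiring an argument is $K_i(I_X)\subset K_{i+1}(I_X)$, which I would get by observing that if $f=x_0^i\bar f+g\in\widetilde{K}_i(I_X)$ then $x_0f=x_0^{i+1}\bar f+x_0g\in\widetilde{K}_{i+1}(I_X)$ with $d_0(x_0g)<i+1$, so $\bar f$ is also the leading $x_0$-coefficient of $x_0f$ and thus lies in $K_{i+1}(I_X)$.

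For the finite generation in (c), the point is that $\widetilde{K}_i(I_X)$ is an $S$-submodule of $R$ contained in a free $S$-module of finite rank. First I would check it is an $S$-module at all: for $s\in S$ and $f\in\widetilde{K}_i(I_X)$ we have $sf\in I_X$, and since leading terms are multiplicative for the degree-lexicographic order, $\ini(sf)=\ini(s)\ini(f)$ with $\ini(s)$ free of $x_0$, whence $d_0(sf)=d_0(f)\le i$. Writing $R=\bigoplus_{j\ge0}x_0^jS$ as a free $S$-module, every element with $d_0\le i$ lies in $\bigoplus_{j=0}^{i}x_0^jS$, which is free of rank $i+1$ over the Noetherian ring $S$. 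Thus $\widetilde{K}_i(I_X)$ is a submodule of a finitely generated module over a Noetherian ring, hence finitely generated and graded.

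For the exact sequence I would use the single ``leading $x_0$-coefficient'' map $\varphi\colon\widetilde{K}_i(I_X)\to S(-i)$, $f=x_0^i\bar f+g\mapsto\bar f$ (with $d_0(g)<i$). I would verify that $\varphi$ is a degree-preserving $S$-homomorphism: if $\deg f=m$ then $\bar f\in S_{m-i}=S(-i)_m$, and $sf=x_0^i(s\bar f)+sg$ with $d_0(sg)<i$ gives $\overline{sf}=s\bar f$. Its image is an $S$-submodule of $S$, i.e.\ an ideal, generated by the various $\bar f$, so $\im\varphi=K_i(I_X)$; and $f\in\ker\varphi$ exactly when $\bar f=0$, i.e.\ $d_0(f)\le i-1$, so $\ker\varphi=\widetilde{K}_{i-1}(I_X)$. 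Accounting for the built-in degree shift, $\varphi$ therefore factors as an isomorphism $\widetilde{K}_i(I_X)/\widetilde{K}_{i-1}(I_X)\iso K_i(I_X)(-i)$.

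Finally, since $\widetilde{K}_0(I_X)\subset\widetilde{K}_{i-1}(I_X)\subset\widetilde{K}_i(I_X)$, the third isomorphism theorem yields
\[
0\to\frac{\widetilde{K}_{i-1}(I_X)}{\widetilde{K}_0(I_X)}\to\frac{\widetilde{K}_i(I_X)}{\widetilde{K}_0(I_X)}\to\frac{\widetilde{K}_i(I_X)}{\widetilde{K}_{i-1}(I_X)}\to0,
\]
and substituting the isomorphism above gives precisely the asserted sequence. I expect the only real care to be needed in the bookkeeping underlying $\varphi$---that multiplication by $S$ preserves $d_0$ and that $f\mapsto\bar f$ is well defined, $S$-linear, and carries the right degree shift; once these leading-term facts are in hand, the remainder is formal homological algebra.
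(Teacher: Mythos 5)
Your proof is correct, and it supplies exactly the routine verification that the paper omits: the Observation is stated without proof, with the underlying theory deferred to Green's lecture notes on partial elimination ideals. Your argument — reading $d_0$ off the decomposition $R=\bigoplus_{j\ge0}x_0^jS$, using multiplicativity of leading terms to get the $S$-module structure and Noetherianity, and realizing $K_i(I_X)(-i)$ as the image of the leading-$x_0$-coefficient map with kernel $\widetilde{K}_{i-1}(I_X)$ — is the standard one and handles all the points that actually need checking.
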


In general we can see at least when the $K_i(I)$'s stabilize and what they look like for inner projections. The following proposition is the anwser. It also tells us a minimal free resoultion for some infinitely generated graded $S$-module which is very useful to understand the defining equations and syzygies of inner projections.

\begin{Prop}\label{linear}
Let $X\subset \P^N$ be a reduced projective scheme with a homogeneous defining ideal $I$. Let $q=(1,0,\ldots,0)\in X$.
\begin{itemize}
\item[(a)] If $I$ satisfies property $\textbf{N}_{d,1}$, $K_i(I)$ stabilizes at least at $i=d-1$ to an ideal defining $TC_{q}X$,
the tangent cone of $X$ at $q$. So if $q$ is smooth, $K_{d-1}(I)$ consists of linear forms which defines $T_q X$.
\item[(b)] In particular, if $I$ is generated by quadrics and $q$ is smooth, then $K_{i}(I)$ stabilizes at $i=1$ step to an ideal
$I_L=(l_1,\ldots,l_e), e=\codim(X,\P^N)$ which defines the tangent space $T_{q}X$, i.e. $\,\, J=K_0(I)\subset
I_L=K_{1}(I)=\cdots=K_{i}(I)=\cdots\subset S$ and $I/J$ has obvious syzygies as an infinitely generated $S$-module such that:
\[
\begin{array}{cccccccccccccccccccc}
&S(-e-1)^{b_e}&& S(-3)^{b_2}&& S(-2)^{b_1}&&&\\
0 \lra & \oplus S(-e-2)^{b_e}&\lra\cdots\lra&\oplus S(-4)^{b_2}&\lra&\oplus S(-3)^{b_1}&\lra&I/J&\lra&0\, ,\\
&\oplus S(-e-3)^{b_e}&&\oplus S(-5)^{b_2}&&\oplus S(-4)^{b_1}&&&&\\
&\cdots&&\cdots&&\cdots&&
\end{array}
\]
where $b_i={e\choose i}$.
\end{itemize}
\end{Prop}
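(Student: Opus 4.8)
The plan is to read the whole proposition off a single observation: the map $f\mapsto\bar f$ of Definition \ref{PEI} is passage to the \emph{initial form} at $q$. Dehomogenizing at $q=(1{:}0{:}\cdots{:}0)$ via $y_i=x_i/x_0$, a homogeneous $f\in(I_X)_m$ written as $f=\sum_{j\ge0}x_0^{m-j}c_j$ with $c_j\in S_j$ has $\bar f=c_{j_0}$ for the least $j_0$ with $c_{j_0}\ne0$, and this $c_{j_0}$ is exactly the lowest-degree form $\ini(f|_{x_0=1})$ of the dehomogenization. Consequently $K_\infty(I_X):=\bigcup_iK_i(I_X)$ is generated by all initial forms of the affine ideal $I_X|_{x_0=1}$, and a short localization argument (clearing unit denominators and taking initial forms of $S$-polynomials) identifies it with the tangent cone ideal, $K_\infty(I_X)=I_{TC_qX}\subset S$. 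Since $\widetilde K_0\subset\widetilde K_1\subset\cdots$ is exhaustive, every $K_i$ sits inside $K_\infty=I_{TC_qX}$; the content of (a) is that equality is already reached at $i=d-1$.

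For (a) the inclusion $K_{d-1}(I_X)\subseteq I_{TC_qX}$ is automatic, so the point is the reverse inclusion together with stabilization. Because $q\in X$, every generator $f_\alpha$ of $I_X$ satisfies $f_\alpha(q)=0$, hence carries no pure $x_0^{\deg f_\alpha}$ term and has $d_0(f_\alpha)\le\deg f_\alpha-1\le d-1$; thus all initial forms $\bar f_\alpha$ lie in $K_{d-1}$. When $q$ is \emph{smooth} this closes the argument cleanly: $TC_qX=T_qX$ is the linear space cut out by the linear parts of the local equations $f_\alpha|_{x_0=1}$, these nonzero linear parts are precisely the $\bar f_\alpha$ that happen to be linear, and they span $(T_qX)^{\ast}$; hence $I_L=I_{T_qX}=K_{d-1}(I_X)=K_d(I_X)=\cdots$, an ideal of $e=\codim(X)$ independent linear forms. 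For singular $q$ one still gets that $K_{d-1}$ generates the tangent cone ideal, now invoking Green's stabilization of the $K_i$ bounded by the generation degree $d$; this singular bookkeeping is the only delicate point of (a), and it is not needed for (b).

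Part (b) is the case $d=2$ of (a): generated by quadrics, which (as $f_\alpha(q)=0$) have the shape $x_0\ell-Q'$ with $\ell\in S_1,\ Q'\in S_2$, so $d_0\le1$ and stabilization occurs at $i=1$, giving $I_{X_q}=K_0\subset I_L=K_1=K_2=\cdots$ with $I_L=(\ell_1,\dots,\ell_e)$ a complete intersection of $e$ linear forms resolved by the Koszul complex on $\ell_1,\dots,\ell_e$. I then compute the associated graded of $N:=I_X/I_{X_q}$ for the $x_0$-degree filtration: the short exact sequences \eqref{ses_PEI} give $\widetilde K_i/\widetilde K_{i-1}\cong K_i(I_X)(-i)$, whence $\mathrm{gr}(N)=\bigoplus_{i\ge1}K_i(-i)=\bigoplus_{i\ge1}I_L(-i)$. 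Summing the Koszul resolutions of the $I_L(-i)$ produces exactly the free modules in the displayed diagram, the $p$-th one being $\bigoplus_{i\ge1}S(-(p+1)-i)^{\binom{e}{p+1}}=\bigoplus_{k\ge0}S(-(p+2)-k)^{b_{p+1}}$ with $b_i=\binom ei$.

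The main obstacle is that this filtration does \emph{not} split, so one cannot simply declare $N\cong\bigoplus_{i\ge1}I_L(-i)$: the Koszul syzygies at level $i$ acquire correction terms supported on levels $<i$ (already visible for the twisted cubic, where the level-$1$ Koszul relation picks up a multiple of the level-$0$ generators). The remedy is to lift the minimal resolution of $\mathrm{gr}(N)$ through the filtration by the horseshoe lemma and pass to the direct limit over $i$, obtaining a free resolution of $N=\varinjlim(\widetilde K_i/\widetilde K_0)$ whose terms are the claimed direct sums; one must then check this lift is \emph{minimal}. A generator of the resolution of $I_L(-i)$ sits in degree $i+p+1$ at homological step $p$, while the lower-filtration targets of the horseshoe connecting maps sit in degree $j+p$ with $j\le i-1$, and $i+p+1>(i-1)+p$ forbids any unit entry, so every differential has entries in $\mathfrak m$. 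Hence the lifted resolution is minimal and its graded Betti numbers agree with those of $\mathrm{gr}(N)$, namely $b_i=\binom ei$ with the shifts recorded in the diagram. This degree-gap argument is the crux that upgrades the associated-graded computation to the asserted resolution of $I_X/I_{X_q}$.
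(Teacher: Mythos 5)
Your proposal is correct and follows essentially the same route as the paper: stabilization of the $K_i(I_X)$ via the bound $d_0(f_\alpha)\le d-1$ forced by $q\in X$, identification of the stable ideal with the tangent cone via initial forms, and assembly of the resolution of $I_X/I_{X_q}$ from the Koszul resolutions of the filtration quotients $\widetilde K_i/\widetilde K_{i-1}\cong K_i(I_X)(-i)=I_L(-i)$ using the short exact sequences \eqref{ses_PEI}. The only difference is presentational: where the paper inducts on $i$ through the long exact $\Tor$ sequences and tacitly uses that the connecting maps vanish, you run the horseshoe lemma and verify minimality by the degree gap $i+p+1>j+p$ for $j\le i-1$ --- the same degree comparison, made explicit.
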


\begin{proof}
(a) Since $I$ is generated in deg $\le d$ and $q=(1,0,\ldots,0)\in X$, we have generators $\{F_i\}$ of $I$ with $d_{0}(F_i)\le d-1$
because there is no generator of the form ${x_0}^{d}+$ other lower terms in ${x_0}$.
From this, every leading term $f$ of a homogeneous polynomial $F$ in $I$ of $\deg k~(k\ge d)$ is written as $x_0^\Box\cdot\bar{f}$
where  $\bar{f}\in K_{c}(I)$ for some $c\le d-1$.
So $K_i(I)$ stabilizes at least at $i=d-1$. Note that all $\bar{f}\in K_{i}(I)\,(i\ge0)$ are also regarded as the defining equations
of tangent cone $TC_{q}X$ of $X$ at $q$ because they come from $f=x_0^{i}\bar{f} + g \in I$, $d_0(g)<i$. Therefore,
$K_i(I)$ stabilizes to the ideal defining $TC_{q}X$. In case of a smooth point $q\in X$, $T_q X=TC_q X$ and $K_{d-1}(I)=(\ell_1,\ldots,\ell_e)$,
$e=\codim(X, \P^N)$. \\
(b) Since $d=2$ and $q$ is a smooth point, $K_{i}(I)$ becomes $I_L=(\ell_1,\ldots,\ell_e)$ for each $i\ge1$. For the sake of the $S$-module syzygy
of $I/J$, first note that $I=\widetilde{K}_{\infty}(I)$. From the exact sequence (\ref{ses_PEI}),
we get $\widetilde{K}_1(I)/J\simeq K_1(I)(-1)$ with the following linear Koszul resolution: letting $b_i={e\choose i}$,
$$0 \lra S(-e-1)^{b_e}\lra\cdots\lra S(-3)^{b_2}\lra S(-2)^{b_1}\lra \widetilde{K}_1(I)/J\lra0 .$$ Next,
$K_{2}(I)(-2)=K_{1}(I)(-2)$ has also linear syzygies:
$$0 \lra S(-e-2)^{b_e}\lra\cdots\lra S(-4)^{b_2}\lra S(-3)^{b_1}\lra K_{2}(I)(-2)\lra 0,$$
and we have the following exact sequence from
(\ref{ses_PEI}) again,
$$0\rightarrow \frac{\widetilde{K}_{1}(I)}{J}\rightarrow \frac{\widetilde{K}_{2}(I)}{J}\rightarrow K_{2}(I)(-2)\rightarrow 0.$$
By the long exact sequence of
$\Tor$, we know that
$$ \begin{array}{cccccccccccccccccccc}
&S(-e-1)^{b_e}&&S(-3)^{b_2}&& S(-2)^{b_1}&&&\\
0 \lra & \oplus S(-e-2)^{b_e}&\lra\cdots\lra&\oplus S(-4)^{b_2}&\lra& \oplus  S(-3)^{b_1}&\lra&\tilde{K}_{2}(I)/J&\lra&0.\\
\end{array} $$
Similarly, we can compute the syzygy of $\widetilde{K}_{i}(I)/J$ for any $i$,
and we get the desired resolution of $I/J=\widetilde{K}_{\infty}(I)/J$ as $S$-module in the end.
\end{proof}

\begin{Remk}\label{commute}
For the next section, we remark some useful facts as follows:
\begin{itemize}
\item[(a)]{\bf(Reduction of syzygies)} From the sequence (\ref{ses_PEI}), we have an isomorphism
\begin{equation}\label{reduction_syz} 
\Tor^S_i(I/J)_{i+j}\simeq\Tor^S_i(\widetilde{K}_{d}(I)/J)_{i+j}~~\textrm{for any}~d\ge j-1 .
\end{equation}
In other word, the syzygies of an infinitely generated $S$-module $I/J$ can be computed from the syzygies of finitely generated $S$-module $\widetilde{K}_i(I)$. Further, if \textit{all $K_i(I)$'s allow only linear syzygies at each step} (i.e. $\Tor^{S}_a(K_i(I))_{a+b}=0$ for $\forall a$ and $\forall b\neq i+1$), then 
\begin{equation*}
\Tor^S_i(I/J)_{i+j}\simeq\Tor^S_i(K_{j-1}(I)(-j+1))_{i+j} ~\textrm{for any}~ i,j~ 
\end{equation*} 
 as Proposition \ref{linear}~(b) shows us that the syzygies of $I/J$ essentially come just from the Koszul syzygies of $\{{x_0}^\alpha\ell_1,\ldots,{x_0}^\alpha\ell_e\}$ of $K_\alpha(I)$.

\item[(b)]{\bf(Commutativity of $x_0$-multiplication)} Consider the $S$-module homomorphism $\phi:I \rightarrow I/J$, the natural quotient map and also consider
 multiplication maps in both $I$ and $I/J$. This multiplication $\times x_0$ is not well-defined in $I/J$, while it is a well-defined
 $S$-module homomorphism in $I$. But if $X$ is quadratic and $q$ is a smooth point, then, by Proposition \ref{linear}~(b) and above (a), we have a commuting diagram in $\Tor$-level:

\xymatrix @R=2pc @C=2pc{
&&\Tor^{S}_{i}(I)_{i+j+1} \ar[r]^{\phi} & \,\Tor^{S}_{i}(I/J)_{i+j+1} \ar[r]^{\psi}_{isom.}&\Tor^S_{i}(L_{j-1}(I))_{i+j+1}\\
&&\Tor^{S}_{i}(I)_{i+j} \ar[r]^{\phi}\ar[u]^{\times x_0} & \Tor^{S}_{i}(I/J)_{i+j} \ar@{-->}[u]^{\exists~\times x_0}\ar[r]^{\psi}_{isom.}&\Tor^S_{i}(L_{j-1}(I))_{i+j}\ar[u]^{\times x_0}~,
} where $L_{j-1}(I):=K_{j-1}(I)(-j+1)$ and each row $\psi\circ\phi$ is induced from the $S$-homomorphism $I \to L_{j-1}(I)$ given by $f = x_0^{j-1}\bar{f}_{j-1} + x_0^{j-2}\bar{f}_{j-2} + \cdots + \bar{f}_{0} \mapsto x_0^{j-1}\bar{f}_{j-1}$ which naturally commutes with the $x_0$-multiplication.
\end{itemize}
\end{Remk}

\begin{Remk}
{\bf{(Outer projection case)}}
\begin{itemize}
\item[(a)] We can also consider outer projection by the similar method. In this case $K_i(I)$ always stabilizes at least at $d-$th step to $(1)=S$
if $I$ satisfies $\textbf{N}_{d,1}$. More interesting fact is that $K_{d-1}(I)$ consists of linear forms with $\textbf{N}_{d,2}$-condition.
Especially, suppose that X satisfy property $\textbf{N}_{2,2}$ and $q=(1,0,\cdots, 0)\notin X$. Then $K_{1}(I)$ is an ideal of linear forms
$I_\Sigma$ defining the singular locus $\Sigma$ of $\pi_{q}$ in ${{X_q}} \subset \P^{N-1}$ (see \cite{AK} for details). By the similar method as in the inner
projection, we see that $I/J$ has simple $S$-module syzygies such that:
\[
\begin{array}{ccccclcccccccccccccc}
0 \lra &S(-t-1)^{b_t}&\lra\cdots\lra& S(-3)^{b_2}&\lra&S(-2)^{b_1+1}&\lra&I/J&\lra&0\, ,\\
&&&&&\oplus S(-3)&&&&\\
&&&&&\oplus S(-4)&&&&\\
&&&&&\cdots&&&&
\end{array}
\]
where $b_i={t\choose i}$, $t=\codim(\Sigma, \P^{N-1})$. So, this resolution can be used to study the outer projection case.
\item[(b)] The stabilized ideal gives an important information for projections. In outer case of $\textbf{N}_{2,p}$ $(p\ge2)$,
it is shown in \cite {AK,P} that the dimension of $\Sigma$ determines the number of quadric equations and the arithmetic
depth of projected varieties according to moving the center of projection. In our inner projection, $K_{1}(I)$ also shows us
the tangential behavior of $X$ at $q$ and $TC_{q}X$ plays an important role in our problem.
\end{itemize}
\end{Remk}

Now there arise some basic and natural questions. \textit{How are the syzygies of $J$ related to the $S$-module syzygies of $I$ and to the $R$-module
syzygies of $I$?} With the assumption for property $\textbf{N}_{2,p}$, we may ask the following question specifically : Is $J$ generated only by quadrics
if so $I$ is? There might be cubic generators like $\ell_iQ_{0,j}-\ell_jQ_{0,i} (=\ell_j\cdot[\ x_0\ell_i-Q_{0,i}]\ - \ell_i\cdot[\ x_0 \ell_j-Q_{0,j}]\ )$
in $J$ (see (\ref{write_I}) in Section \ref{section_1.5}). If not, how about the case of $\textbf{N}_{2,2}$? What can we say about higher linear syzygies of ${{X_q}}$? We will answer these kind of
syzygy and elimination problems and derive stronger results by using the elimination mapping cone sequence and the partial elimination ideal theory in next section.\\


\section{Embedded linear syzygies and Applications}\label{section_3}

Recently, D. Eisenbud et. al. showed that with assumption for some property $\textbf{N}_{2,p}$, the syzygies of $X$ restrict surjectively to the syzygies of linear sections in their paper `Restricting linear syzygies', \cite{EGHP1}.
We consider in this section the behavior of the syzygies under inner projections and we present one of our main theorems on `embedded linear syzygies' which is
the natural projection-analogue of the linear section case.

\begin{Thm}\label{Main result}
Let $X\subset \mathbb P^N$ be a nondegenerate reduced quadratic scheme
whose saturated ideal $I_X$ satisfies property $\textbf{N}_{2,p}$ for some $p\geq 1$ and $q\in X$ be a
smooth point. Consider the inner projection $\pi_{q}:X\dashrightarrow {X_q}\subset \mathbb P^{N-1}$. Then there is an injection between the minimal free resolutions of $I_{{X_q}}$ and $I_X$ up to first $(p-1)$-th step, i.e.
$$\exists\,\,\,
f:\Tor^{S}_{i}(I_{{X_q}},k)_{i+j}\hookrightarrow\Tor^{R}_{i}(I_X,k)_{i+j}\,\,\,
\textrm{for}\,\, 0\le i\le p-2,~~\forall j\in\Z$$
which is induced by the natural inclusion $I_{{X_q}}\hookrightarrow I_X$ and the elimination mapping cone sequence (see Theorem \ref{MC_seq} (a)).
\end{Thm}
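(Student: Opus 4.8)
The plan is to build the asserted map $f$ as a composite of two natural maps on $\Tor$ and then analyze its kernel one internal degree at a time, separating the extremal strand $j=2$ (where $f$ is a genuine injection, and the target is nonzero) from the range $j\ge 3$ (where I will show the \emph{source} already vanishes, so that $f$ is injective for trivial reasons and, as a byproduct, $X_q$ satisfies $\textbf{N}_{2,p-1}$). Concretely, from the short exact sequence of $S$-modules $0\to I_{X_q}\to I_X\to I_X/I_{X_q}\to 0$ I take the induced map $\alpha:\Tor^{S}_i(I_{X_q})_{i+j}\to \Tor^{S}_i(I_X)_{i+j}$ on the long exact $\Tor^{S}$-sequence, and from the mapping cone sequence (\ref{exact_MC}) with $M=I_X$ I take the natural map $\beta:\Tor^{S}_i(I_X)_{i+j}\to \Tor^{R}_i(I_X)_{i+j}$ arising from the inclusion $K^{S}_{\ast}(I_X)\hookrightarrow \mathbb{C}_{\bar\mu}$. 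I set $f=\beta\circ\alpha$; this is precisely the map ``induced by the inclusion $I_{X_q}\hookrightarrow I_X$ and the mapping cone sequence''. Exactness of the two sequences identifies $\ker\beta$ in internal degree $i+j$ with the image of $\times x_0:\Tor^{S}_i(I_X)_{i+j-1}\to \Tor^{S}_i(I_X)_{i+j}$, and $\ker\alpha$ with the image of the connecting map out of $\Tor^{S}_{i+1}(I_X/I_{X_q})_{i+j}$.

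First I would dispose of the easy degrees. Since $I_X$, and hence $I_{X_q}$, is generated in degrees $\ge 2$, all of $\Tor^{S}_i(-)_{i+j}$ vanish for $j\le 1$ and there is nothing to prove. For $j=2$ I claim both factors are injective: $\Tor^{S}_i(I_X)_{i+1}=0$ forces $\ker\beta=0$ at degree $i+2$, while $\Tor^{S}_{i+1}(I_X/I_{X_q})_{i+2}=0$ (by Proposition \ref{linear}(b), where $\Tor^{S}_k(I_X/I_{X_q})_{k+j'}\ne 0$ only for $j'\ge 2$) forces the connecting map, hence $\ker\alpha$, to vanish. Thus $f$ is injective on the whole strand $j=2$, which is exactly where $\Tor^{R}_i(I_X)_{i+2}$ is genuinely nonzero.

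The heart of the matter is the range $j\ge 3$, where I must show $\Tor^{S}_i(I_{X_q})_{i+j}=0$ for $0\le i\le p-2$; since $\Tor^{R}_i(I_X)_{i+j}$ vanishes there by $\textbf{N}_{2,p}$, this is equivalent to injectivity of $f$. Writing $\phi_k:\Tor^{S}_k(I_X)\to \Tor^{S}_k(I_X/I_{X_q})$ for the quotient map, the long exact $\Tor^{S}$-sequence yields a short exact sequence
$$0\to \coker\big(\phi_{i+1}\big)_{i+j}\to \Tor^{S}_i(I_{X_q})_{i+j}\to \ker\big(\phi_i\big)_{i+j}\to 0,$$
so it is enough to kill $\ker\phi_i$ and $\coker\phi_{i+1}$ in degree $i+j$ for $j\ge 3$. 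Here I use the commutativity of $\times x_0$ with $\phi$ (Remark \ref{commute}) together with two periodicity inputs: $\times x_0$ is an isomorphism on $\Tor^{S}_k(I_X/I_{X_q})_{k+j'}$ for $j'\ge 2$ (the resolution of Proposition \ref{linear}(b) is built from the Koszul syzygies of $\{x_0^{\alpha}\ell_1,\dots,x_0^{\alpha}\ell_e\}$, on which $\times x_0$ shifts $\alpha\mapsto\alpha+1$), and $\times x_0$ is an isomorphism on $\Tor^{S}_k(I_X)_{k+j'}$ for $j'\ge 3$, $0\le k\le p-2$ (Theorem \ref{betti_MC}). Hence in the commutative ladder the vertical $x_0$-maps are isomorphisms in the stable range, which forces $\ker\phi_i$ and $\coker\phi_{i+1}$ to be constant in $j$. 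On the other hand $I_{X_q}$ is a finitely generated $S$-module, so $\Tor^{S}_i(I_{X_q})_{i+j}=0$ for $j\gg 0$; therefore $\ker\phi_i$ (a quotient of $\Tor^{S}_i(I_{X_q})$) and $\coker\phi_{i+1}$ (a subobject of it) vanish for $j\gg 0$, and being constant in the stable range they vanish for all $j\ge 3$.

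I expect the main obstacle to be exactly this last step: making the periodicity/stabilization argument close down to the threshold $j=3$ and, above all, handling the top homological degree $i=p-2$, where Theorem \ref{betti_MC} supplies only \emph{surjections} (not isomorphisms) of $\times x_0$ on $\Tor^{S}_{p-1}(I_X)$, so that the ladder controlling $\coker\phi_{i+1}$ degenerates and must be treated separately by hand. Verifying the commutativity of $\times x_0$ with the quotient map $\phi$ at the $\Tor$ level (Remark \ref{commute}), and checking that the stabilization indices align with the bound $j\ge 3$ rather than some larger threshold, are the two points where I would be most careful; the remainder is a formal consequence of the two long exact sequences.
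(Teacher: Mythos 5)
Your construction of $f=\beta\circ\alpha$, the three-way split on $j$, and the treatment of $j\le 1$ and $j=2$ coincide exactly with the paper's proof, as does the overall strategy for $j\ge 3$: reduce to the vanishing of $\Tor^{S}_{i}(I_{{X_q}})_{i+j}$ and then play off the commutativity of $\times x_0$ with $\phi$ (Remark \ref{commute}), the $x_0$-periodicity of $\Tor^{S}(I_X/I_{{X_q}})$ from Proposition \ref{linear}~(b), the periodicity of $\Tor^{S}(I_X)$ from Theorem \ref{betti_MC}~(b), and the eventual vanishing coming from finite generation of $I_{{X_q}}$. The one step that does not close as written is the claim that $\coker(\phi_{i+1})$ is ``constant in $j$'' on the stable range. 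Constancy of a cokernel along the ladder requires isomorphisms on \emph{both} verticals, but on the $\Tor^{S}_{i+1}(I_X)$ side Theorem \ref{betti_MC}~(b) gives only a surjection at the first step (internal degree $(i+1)+2\to(i+1)+3$, which is exactly the threshold $j=3$) and, when $i=p-2$, only surjections in every degree. With a mere surjection on the left you only get a chain $\coker(\phi_{i+1})_{i+j}\twoheadrightarrow\coker(\phi_{i+1})_{i+j+1}\twoheadrightarrow\cdots\twoheadrightarrow 0$, which says nothing about the term you care about. You flagged both trouble spots yourself, but they are not removable by the stabilization-plus-constancy argument alone, so as written this is a genuine (if small) gap.

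The repair --- and it is what the paper actually does --- is to observe that surjectivity on the left already suffices if you chase the square in one shot rather than degree by degree. Iterate $\times x_0$ down to an internal degree $i+1+N$ with $N\gg 0$ where $\Tor^{S}_{i+1}(I_{{X_q}})$ and $\Tor^{S}_{i}(I_{{X_q}})$ both vanish, so that the bottom horizontal $\phi_{i+1}$ is an isomorphism. The composite (surjection down the left column) followed by (isomorphism along the bottom) is surjective; by commutativity it equals (top $\phi_{i+1}$) followed by (the isomorphism down the right column); since the right vertical is injective, the top $\phi_{i+1}$ must be surjective, i.e.\ $\coker(\phi_{i+1})_{i+j}=0$, uniformly for all $j\ge 3$ and all $i\le p-2$. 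Your argument for $\ker(\phi_i)$ is fine as stated, since there both verticals genuinely are isomorphisms in the range $j\ge 3$, $0\le i\le p-2$. With the cokernel step closed in this way, your proof is the paper's proof.
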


\begin{Remk}\label{scheme-theo_Np}
The method used to prove Theorme \ref{Main result} is, in fact, available for any ideal $I$ defining $X$ and $J=I\cap S$ defining $X_q$ scheme-theoretically.
\end{Remk}

\begin{proof} We have a basic short exact
sequence of $S$-modules,
\begin{equation}~\label{idealseq}
0\llra I_{{X_q}} \llra I_X \llra I_X / I_{{X_q}} \llra 0.
\end{equation}

From the long exact sequence of ~(\ref{idealseq}) and the mapping cone sequence of $I_X$ in ~(\ref{MC_seq}), we have a diagram\\[1ex]
\xymatrix @R=3pc @C=2pc{
&0\ar[d]&0\ar[d]&\\
  &\Tor^{S}_{i}(I_{{X_q}} ,k)_{i+j-1} \ar[d] & \Tor^{S}_{i}(I_{{X_q}} ,k)_{i+j}\ar[d]^{g}\ar[dr]^{f\, :=\,h\circ g} &  \\
  &\Tor^{S}_{i}(I_X ,k)_{i+j-1} \ar[d]\ar[r]^{\times x_0} & \Tor^{S}_{i}(I_X ,k)_{i+j} \ar[d]\ar[r]^{h}& **[r]\Tor^{R}_{i}(I_X ,k)_{i+j}\\
  &\Tor^{S}_{i}(I_X /I_{{X_q}} ,k)_{i+j-1} \ar[r]^{\times x_0} & \Tor^{S}_{i}(I_X/ I_{{X_q}} ,k)_{i+j} &
}
\vspace{0.5cm}
For any $0\le i\le p-2$, we proceed with $j$ case by case.\\

\verb"Case 1)" $j\le 1$: Since $\Tor^{S}_{i}(I_{{X_q}} ,k)_{i+j}=0$, so it is obviously injected to $\Tor^{R}_{i}(I_X ,k)_{i+j}$ by $f$.\\

\verb"Case 2)" $j=2$ (i.e. linear syzygy cases for each $i$): From (\ref{idealseq}), we have
$$\Tor_{i+1}^{S}(I_X /I_{{X_q}},k)_{i+2}\llra\Tor_{i}^{S}(I_{{X_q}},k)_{i+2}\st{g}\Tor_{i}^{S}(I_X,k)_{i+2}$$
Since $q$ is a smooth point, with $N_{2,1}$ condition we know the syzygy structures of $I_X /I_{{X_q}}$ as $S$-module by Proposition \ref{linear}~(b).
It shows $\Tor^{S}_{i+1}(I_X /I_{{X_q}} ,k)_{i+2}=0$, implying that $g$ is injective. Since $X$ is nondegenerate, so $\Tor^{S}_{i}(I_X,k)_{i+1}=0$ and $h$ is also
injective at the horizontal mapping cone sequence of above diagram. Hence $f$ is injective in this case, too.\\

\verb"Case 3)" $j\ge 3$:  First note that $\Tor^{R}_{i}(I_X,k)_{i+j}=0$ for $0\le i \le p-1, j\ge 3$ by the assumption of property $\textbf{N}_{2,p}$.
We will show that $g$ is injective and $\Tor^{S}_{i}(I_X,k)_{i+j}$ is isomorphic to $\Tor^{S}_{i}(I_X /I_{{X_q}},k)_{i+j}$ for $0\le i \le p-2$. Then we can conclude that $\Tor^{S}_{i}(I_{{X_q}},k)_{i+j}=0$, so $f$ is injective for $0\le i \le p-2$.

Consider the commutative diagram (by Remark \ref{commute}) in the third quadrant part of above diagram. Repeating this diagram by multiplying $x_0$ sufficiently, we have the following diagram:\\[1ex]
\xymatrix @R=2.5pc @C=2.5pc{
\Tor_{i+1}^{S}(I_X,k)_{i+1+j-1}\ar@{->>}[r]^-{\therefore}_-{surj.} \ar@{->>}[d]^-{surj.}&\Tor_{i+1}^{S}(I_X/I_{{X_q}},k)_{i+1+j-1}\ar[r] \ar[d]^-{isom.}&\Tor_{i}^{S}(I_{{X_q}},k)_{i+j}\ar[r]^-{g}& \\
\Tor_{i+1}^{S}(I_X,k)_{i+1+N}\ar[r]^-{isom.} &\Tor_{i+1}^{S}(I_X/I_{{X_q}},k)_{i+1+N} &&
}
\vspace{0.5cm}

The left vertical map is surjective from Theorem~\ref{betti_MC}~(b), and the right one is an isomorphism by the syzygy structures of $I_X/I_{{X_q}}$ in Proposition \ref{linear}~(b). Since $I_{{X_q}}$ is a finite $S$-module, $\Tor^{S}_{i}(I_{{X_q}}, k)_{i+N}=0$ for sufficiently large $N$, so we get the below(second row) isomorphism. Therefore the map $$\Tor_{i+1}^{S}(I_X,k)_{i+1+j-1}\lra\Tor_{i+1}^{S}(I_X/I_{{X_q}},k)_{i+1+j-1}$$ is surjective, and $g$ is injective.

Similarly, we can have the desired isomorphism between $\Tor^{S}_{i}(I_X,k)_{i+j}$ and $\Tor^{S}_{i}(I_X /I_{{X_q}},k)_{i+j}$ as follows:\\[1ex]

\xymatrix @R=2.5pc @C=2.5pc{
&\Tor_{i}^{S}(I_{{X_q}},k)_{i+j}\ar[r]^-{g}&\Tor_{i}^{S}(I_X,k)_{i+j}\ar[r]^-{\alpha}_-{isom.} \ar[d]^-{isom.}&\Tor_{i}^{S}(I_X/I_{{X_q}},k)_{i+j} \ar[d]^-{isom.} \\
&&\Tor_{i}^{S}(I_X,k)_{i+N}\ar[r]^-{isom.} &\Tor_{i}^{S}(I_X/I_{{X_q}},k)_{i+N}
}
\vspace{0.5cm}

In this case, the mapping cone construction gives the left vertical isomorphism by Theorem \ref{betti_MC}~(b). So the above map $\alpha$ is an isomorphism as we wish, and $\Tor^{S}_{i}(I_{{X_q}},k)_{i+j}=0$ for $0\le i \le p-2, j\ge 3$.
\end{proof}

This main Theorem \ref{Main result} tell us that all the $S$-module syzygies of ${{X_q}}$ are exactly the very ones which are \textit{already} embedded in the linear syzygies of $X$ as $R$-module. This doesn't hold for outer projection and inner projection of varieties with $\textbf{N}_{d,p}\,(d\ge3)$.

\begin{Ex}
Let $C$ be a rational normal curve in $\P^3$ and $I_C$ be the homogeneous ideal $(x_0x_2-x_1^2, x_0x_1-x_1x_3-x_2^2,x_0^2-x_0x_3-x_1x_2)$ under suitable coordinate change. We know $C$ is 2-regular and consider an outer projection of $C$ from $q=(1,0,0,0)$. Then $I_{C_q}=(x_1^3-x_1x_2x_3-x_2^3)$ has a cubic generator (i.e. $\textbf{N}_{3,1}$). Since $x_1^3-x_1x_2x_3-x_2^3=(-x_1)\cdot [\ x_0x_2-x_1^2 ]\ +x_2\cdot [\ x_0x_1-x_1x_3-x_2^2 ]\ $, this is zero in $\Tor^R_0(I_C)_3$ and $\Tor^S_0(I_{C_q})_3\lra\Tor^R_0(I_C)_3$ is not injective. In general, if we take the center $q\in L\simeq \P^2$ which is a multisecant (\textit{e.g. at least $4$-secant}) $2$-plane, then for outer and inner projection cases there is a multisecant line to $X_q$. So, the defining equations of $X_q$ may have larger degrees.
\end{Ex}

As an immediate consequence, we have the following corollary.
\begin{Coro}{\bf(Property $\textbf{N}_{2,p-1}$ of inner projections)}\label{Main result N_{2,p}}
Let $X\subset \mathbb P^N$ be a nondegenerate reduced quadratic scheme satisfying property $\textbf{N}_{2,p}$ for some $p\geq 2$ and $q\in X$ be a
smooth point. Then, the inner projection $X_q$ is also quadratic and satisfies property $\textbf{N}_{2,p-1}$.
\end{Coro}
\begin{proof}
\verb"Case 3)" $j\ge 3$ in the proof of Theorem ~\ref{Main result} is a proof of this corollary.
\end{proof}

\begin{Remk}\label{remk_for_N2,p} There are some remarks on Corollary \ref{Main result N_{2,p}}.
\begin{itemize}
\item[(a)] This corollary can be easily extended to the case of a \textit{general} linear subspace $\Lambda\simeq\P^t$ such that $\dim X\cap\Lambda$ is zero. Precisely, \textit{if $\Lambda$ does not meet $Sing(X)$ and $\langle \Lambda\cap X \rangle =\Lambda$}, then the $t+1$ points of $\Lambda\cap X$ are in linearly general position so that $X_{\Lambda}$ satisfies $\textbf{N}_{2,p-t-1}$ by successive inner projections. To complete this question in Problem (a) for \textit{any} linear subspace $\Lambda$, it remains to consider that how the projections from a singular center or a linear subvariety contained in $X$ behave (see Question \ref{Q_sing}, \ref{Q_subvar}).
\item[(b)] For a smooth irreducible variety $X\subset \Bbb P(H^0(\mathcal L))$ with the condition $\textbf{N}_p$ $(p\ge 1)$ embedded by the complete linear system of a
very ample line bundle $\mathcal L$ on $X$, Y. Choi, P. Kang and S. Kwak showed that the inner projection $X_q$ is smooth and satisfies property $\textbf{N}_{p-1}$
for any $q\in X\setminus{\Trisec}(X)$, i.e. property $\textbf{N}_{p-1}$ holds for $({\Bl}_q(X),\sigma^*\mathcal L-E)$ by using vector bundle techniques and Koszul cohomology methods due to Green-Lazarsfeld (see \cite{CKK}).
Our Corollary \ref{Main result N_{2,p}} extends this result to the category of reduced projective schemes satisfying property $\textbf{N}_{2,p}$ with any smooth point $q\in X$. Note that this uniform behavior looks unusual in a sense that linear syzygies of outer projections heavily depend on moving the center of projection in an ambient space $\P^N$ (see \cite {CKP,KP,P}).
\end{itemize}
\end{Remk}

In order to understand the Betti table of inner projections, we need to consider defining equations of inner projections, depth, and the Castelnuovo-Mumford regularity.

\begin{Prop}{\bf(Quadratic equations of inner projections)}\label{quad_dim}\\
Let $X \subset \P^N$ be a nondegenerate reduced scheme with a defining ideal $I$ and $any$ (possibly $singular$) point $q \in X$. For the inner projection ${X_q}\subset \P^{N-1}$, we have
\begin{itemize}
\item[(a)] $\beta_{0,2}(J) = \beta_{0,2}(I) - \beta_{0,1}(K_1(I))$, where $J$ is the $x_0$-elimination ideal of $I$ as usual. 
\item[(b)] Furthermore, if $I$ is quadratic (so $X$ is quadratic), then we have $\beta_{0,2}(J)=  \beta_{0,2}(I)- N + \dim T_q X$. In particular, in case of $I=I_X$ it coincides with $$h^0(\P^{N-1}, \mathcal I_{{{X_q}}}(2))= h^0(\P^{N}, \mathcal I_{X}(2))- N + \dim\, T_{q}X~.$$
\end{itemize}
\end{Prop}

\begin{proof}
As in the proof of Theorem \ref{Main result}, there is a long exact sequence such that
$$\to\Tor^S_1(I/J,k)_2\to\Tor^S_0(J,k)_2\to\Tor^S_0(I,k)_2\to\Tor^S_0(I/J,k)_2\to 0~.$$
From the reduction of syzygies (\ref{reduction_syz}) in Remark \ref{commute}, we have
\begin{equation*}
\begin{cases}
\Tor^S_1(I/J,k)_2\simeq\Tor_{1}^{S}(K_1(I)(-1),k)_2=\Tor^S_1(K_1(I),k)_1=0\\
\Tor^S_0(I/J,k)_2\simeq\Tor_{0}^{S}(K_1(I)(-1),k)_2=\Tor^S_0(K_{1}(I),k)_1~,
\end{cases}
\end{equation*}
which implies the desired formula in (a) directly.

If $I$ is a quadratic ideal, then by Proposition \ref{linear} the $K_i(I)$ stabilizes at $i=1$ and $K_1(I)$ defines the tangent cone $TC_q X$. We may write $$K_1(I)=(\ell_1,\cdots,\ell_t,\text{\,higher\,\,degree\,\,polynomials\,})$$ , where $I=(x_0 \ell_1-Q_{0,1},\cdots,x_0 \ell_t-Q_{0,t},Q_1,\cdots,Q_s)$ just as our convention. Among the elements of $K_1(I)$ the generators of $K_1(I)_1$, $\{\ell_1,\cdots,\ell_t\}$ define the tangent space $T_q X$ so that we have $\beta_{0,1}(K_1(I))=\codim(T_q X,\P^N)$.
\end{proof}

\begin{Remk}
In outer projection case, there is a formula $h^0(\mathcal I_{{{X_q}}}(2))=h^0(\mathcal I_X(2))-(N-\dim\Sigma_q (X))$ if $X$ satisfies property $\textbf{N}_{2,2}$ (see proposition 4.11 in \cite{AK}, theorem 3.3 in \cite{P}). This also shows that there is a tendency of having more quadrics for projected varieties as $q$ is getting closer to $X$. Note that the negative value of $h^0(\mathcal I_{X_q}(2))$ implies that there is no quadric vanishing on $X_q$.
By this fact, we can expect that the inner projection case has more linear syzygies as Corollary \ref{Main result N_{2,p}} shows.
\end{Remk}

Next question is that how many quadrics defining $X$ are required to satisfy property $\textbf{N}_{2,p}$ and we give the sharp lower bound in the following.
\begin{Coro}{\bf(Neccesary lower bound for property $\textbf{N}_{2,p}$)}\label{LB}
Let $X$ be a nondegenerate reduced quadratic scheme in $\P^{r+e}$ of codimension $e$ and $I$ be the quadratic ideal of $X$.
Suppose that $I$ satisfies property $\textbf{N}_{2,p}$ and $\beta_{0,2}(I)$ is the number of generators of $I$. Then $\beta_{0,2}(I)$ is not less than $\textrm{LB}_p$
as follows:
$$\textrm{LB}_{p}=e\cdot p -\frac{p(p-1)}{2}\le \beta_{0,2}(I) \le \beta_{0,2}(I_X)\,\,(=h^0(\mathcal I_{X}(2))\,)$$
\end{Coro}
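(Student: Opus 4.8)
The plan is to prove the two inequalities separately, the right one being immediate and the left one being the real content. For the upper bound $\beta_{0,2}(I)\le\beta_{0,2}(I_X)$, note that $I$ and the saturated ideal $I_X$ have the same sheafification and $I_X$ is saturated, so $I\subseteq I_X$ and hence $I_2\subseteq (I_X)_2$; since $X$ is non-degenerate neither ideal contains a linear form, so the quadric generators fill the whole degree-two part and $\beta_{0,2}(I)=\dim_k I_2\le \dim_k (I_X)_2=\beta_{0,2}(I_X)=h^0(\mathcal I_X(2))$. For the lower bound, the first thing I would record is the elementary identity
$$\textrm{LB}_p=ep-\frac{p(p-1)}{2}=\sum_{i=0}^{p-1}(e-i)=e+(e-1)+\cdots+(e-p+1),$$
which already reveals the mechanism: a descending induction on $p$, realized geometrically by repeatedly projecting from a smooth point, with each step contributing one summand.

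For the inductive step I would choose a smooth point $q\in X$ (the smooth locus is nonempty as $X$ is reduced) and pass to the inner projection $X_q\subset\P^{N-1}$. By Corollary \ref{Main result N_{2,p}}, together with its scheme-theoretic form (Remark \ref{scheme-theo_Np}), $X_q$ is again non-degenerate, reduced and connected, is cut out by quadrics, and satisfies $\textbf{N}_{2,p-1}$; its codimension drops by one to $e-1$, since $\dim X_q=\dim X=r$ while the ambient dimension decreases by one. By Proposition \ref{quad_dim}, applied at the smooth point $q$ where $\dim T_qX=r$ and $N=r+e$, the number of quadrics drops by exactly the codimension:
$$\beta_{0,2}(I_{X_q})=\beta_{0,2}(I)-\bigl(N-\dim T_qX\bigr)=\beta_{0,2}(I)-e.$$
Iterating the projection $p-1$ times then produces a chain $X=X^{(0)},X^{(1)},\ldots,X^{(p-1)}$ in which $X^{(k)}$ is non-degenerate of codimension $e-k$, satisfies $\textbf{N}_{2,p-k}$, and has quadric count
$$\beta_{0,2}(I_{X^{(k)}})=\beta_{0,2}(I)-\sum_{i=0}^{k-1}(e-i).$$

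The induction terminates at $X^{(p-1)}$, which satisfies $\textbf{N}_{2,1}$, i.e. is scheme-theoretically cut out by quadrics, and has codimension $e-p+1$. The base case is then a height count: a homogeneous ideal whose zero locus has codimension $c$ has height $c$, so by Krull's height theorem it requires at least $c$ generators, and since all its minimal generators are quadrics, $\beta_{0,2}(I_{X^{(p-1)}})\ge e-p+1$. Setting $k=p-1$ and telescoping,
$$\beta_{0,2}(I)=\beta_{0,2}(I_{X^{(p-1)}})+\sum_{i=0}^{p-2}(e-i)\ge (e-p+1)+\sum_{i=0}^{p-2}(e-i)=\sum_{i=0}^{p-1}(e-i)=\textrm{LB}_p,$$
as desired.

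The delicate point, where I expect the real work to be hidden, is guaranteeing that each projection step behaves exactly as predicted: that the quadric count drops by precisely $e$ (no more, no less) and that the full package of hypotheses (non-degeneracy, reducedness, connectedness, availability of a smooth point, and scheme-theoretic $\textbf{N}_{2,p-1}$) is preserved so that Proposition \ref{quad_dim} and Corollary \ref{Main result N_{2,p}} may legitimately be re-applied at the next stage. Once those two results are granted the remainder is a clean telescoping. I would also note that the induction is consistent only when $p\le e$, so that the terminal codimension $e-p+1$ stays positive; but this is automatic, since a non-degenerate scheme of codimension $e$ cannot satisfy $\textbf{N}_{2,p}$ for $p>e$. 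Finally, sharpness of $\textrm{LB}_p$ is exhibited at the extreme $p=e$ by varieties of minimal degree, whose Eagon--Northcott resolution has exactly $\binom{e+1}{2}=\textrm{LB}_e$ quadric generators.
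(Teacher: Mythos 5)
Your proposal is correct and follows essentially the same route as the paper: successive inner projections from smooth points, with Corollary \ref{Main result N_{2,p}} preserving the $\textbf{N}_{2,p-k}$ hypothesis, Proposition \ref{quad_dim} giving the drop of exactly $e-k$ quadrics at each stage, and the terminal bound $\beta_{0,2}(I^{(p-1)})\ge e-p+1$ from the codimension of the last image, followed by telescoping. Your extra remarks (the Krull height justification of the base case, the explicit upper bound $\beta_{0,2}(I)\le\beta_{0,2}(I_X)$, and the observation that $p\le e$ is automatic) are harmless refinements of the same argument.
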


\begin{proof} Let's take a smooth point $q_0$ in $X$ and project $X$ from $q_0$. Let $X^{(1)}$ be the image (the Zariski closure)  and $I^{(1)}$ be the elimination ideal of $I$. Then, from Proposition \ref{quad_dim} we get
$$\beta_{0,2}(I^{(1)})=\beta_{0,2}(I)-(r+e)+r.$$
We also know that $I^{(1)}$ defines $X^{(1)}$ scheme-theoretically and satisfies property $\textbf{N}_{2,p-1}$. Take another smooth point $q_1$ in $X^{(1)}$ and project it from $q_1$. Then, with the same notation, we have
$$\beta_{0,2}(I^{(2)})=\beta_{0,2}(I^{(1)})-(r+e-1)+r.$$ Taking successive inner projections, we get
$$\beta_{0,2}(I^{(p-1)})=\beta_{0,2}(I^{(p-2)})-(r+e-p+2)+r.$$
Summing up both sides of above equations, it gives
$$\beta_{0,2}(I^{(p-1)})=\beta_{0,2}(I)-\frac{(p-1)(2r+2e-p+2)}{2}+r(p-1) \cdots(\ast)~.$$
And we know that $X^{(p-1)}$ is still cut by quadrics (i.e. $\textbf{N}_{2,1}$). So $\beta_{0,2}(I^{(p-1)})$ is not less than $\codim\,X^{(p-1)}=(r+e)-p+1-r=e-p+1$. If we plug-in this inequality to $(\ast)$, we've got the desired bound $\textrm{LB}_{p}$.
\end{proof}

\begin{Remk}\label{LB_remk}
This bound is sharp for $p=1$ by complete intersections, $p=e-1$ by \textit{del Pezzo} varieties (see Theorem \ref{rigidity} (b)), and $p=e$ by minimal degree varieties. Note also that the upper bound for $\beta_{0,2}(I_X)$ for a nondegenerate integral subscheme $X\subset \P^{r+e}$ of codimension $e$ is $\frac{e(e+1)}{2}$ and this maximum number can be attained if and only if the variety $X$ is of minimal degree from Corollaries $5.4, 5.8$ in \cite{Z}.
\end{Remk}

\begin{Remk}{\bf(Degree bound by property $\textbf{N}_{2,p}$)}\label{Al-Si} Recently, A. Alzati and J.C. Sierra get a bound of quadrics for $\textbf{N}_{2,2}$ as paying attention to the structures of the rational map associated to the linear system of quadrics defining $X$, which coincides with our bound $\textrm{LB}_2$ (see \cite{AS}). They also derive a degree bound in terms of codimension $e$, ${d \choose 2}\le {2e-1\choose e-1}$ whose asymptotic behavior is $2^e/\sqrt[4]{\pi e}$ and describe the equality condition: \textit{this holds if and only if the equality of} $\textrm{LB}_2$ \textit{holds}. From this theorem, in case of $p=2$ we also have a rigid condition on degree of the boundary $X$ as if we get some rigidity when $p=1,e-1$, and $e$ (see Remark \ref{LB_remk} and Theorem \ref{rigidity}). Using our inner projection method (\textit{e.g.} Corollary \ref{LB}), we could improve this degree bound a little as follows:
$${d+2-p \choose 2}\le {2e+3-2p\choose e+1-p} \;\;,\;\; d\thicksim 2^{e+2-p}/\sqrt[4]{\pi e} \;\;\;(\textrm{as}\;\; e\;\;\textrm{getting sufficiently large})$$
under the assumption of property $\textbf{N}_{2,p}~(p\ge 2)$ of $X$.
\end{Remk}

\begin{Ex}
It would be interesting to know that if $e\le \beta_{0,2}(I_X)< 2e-1$ then $X$ has always at least a syzygy of defining equations which is not linear because property $\textbf{N}_{2,2}$
does not hold for $X$. For example, let $C$ be the general embedding of degree 19 in $\P^7$ of genus $12$. Then $C$ is a smooth arithmetically Cohen-Macaulay curve  which is cut out scheme-theoretically by  9 quadrics, but the homogeneous ideal $I_C$ is generated by $9$ quadrics and $2$ cubics (see \cite {Ka} for details). These quadratic generators should have at least a syzygy of higher degree as well as linear syzygies.
\end{Ex}

\begin{Ex}{\bf(Veronese embedding $v_d(\P^n)$)}
It is shown that $v_d(\P^n)$ fails property $\textbf{N}_{2,3d-2}$ for $n\ge 2$ and $d\ge 3$ and conjectured that $v_d(\P^n)$ satisfies property $\textbf{N}_{2,3d-3}$ for $n\ge 2$ and $d\ge 3$ (see \cite{OP, EGHP1}). We can also verify the failure of property $\textbf{N}_{2,p}$ of Veronese embedding $X=v_d(\P^n)$ for some cases by using this low bound $\textrm{LB}_{p}$. For example, when $n=2, d=3, p=3d-2=7$, we get $\beta_{0,2}(I_X)=27$ and $\textrm{LB}_7=7\cdot7-{7\choose2}=28$. Therefore, $v_3(\P^2)$ fails to satisfy $\textbf{N}_{2,7}$. Similarly, $v_2(\P^3)$ fails property $\textbf{N}_{2,6}$.
However, it does not give the reason why $v_3(\P^3)$ does fail to be $\textbf{N}_{2,7}$ for the case $n=d=3, p=3d-2=7, e=16$, because $\beta_{0,2}(I_X)=126>91=\textrm{LB}_{7}$. For such $p$ in the middle area of $1\le p\le e$, $\textrm{LB}_p$ seems not to give quite sufficient information for property $\textbf{N}_{2,p}$, while it may be more effective to decide $\textbf{N}_{2,p}$ of a given variety for rather large $p$ among $1\le p\le e$.
\end{Ex}

\section{Arithmetic depth and Syzygetic rigidity}\label{section_3.5}

Now, we proceed to investigate the $\depth$ of inner projections to understand the shape of the Betti table and Castelnuovo-Mumford regularity.
In this section we always consider the saturated ideal $I_X$ among ideals defining $X$, because $\depth_R(R/I)=0$ for any defining ideal $I$ which is not saturated. The following result looks very surprising when we compare this with the outer projection case (see Remark \ref{depth_remk}).

\begin{Thm}\label{depth}{\bf(The depth of inner projections)}
Let $X\subset \mathbb P^N$ be a nondegenerate reduced subscheme and $I_X$ be generated by quadrics. Consider the inner projection $\pi_{q}:X\dashrightarrow {X_q}\subset \P^{N-1}$ from a smooth point $q\in X$. Then,
\begin{itemize}
\item[(a)] the projective dimension of $S/I_{{X_q}}$, $\pd_{S}(S/I_{{X_q}})=\pd_{R}(R/I_X)-1$;
\item[(b)] $\depth_{R}(X)=\depth_{S}({{X_q}})$. In particular, $X$ is arithmetically Cohen-Macaulay if and only if so is ${{X_q}}$.
\end{itemize}
\end{Thm}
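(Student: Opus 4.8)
The plan is to derive (b) from (a) by Auslander--Buchsbaum and to prove (a) by controlling the \emph{top} of the two minimal resolutions, transporting vanishing from $\Tor^R(I_X)$ to $\Tor^S(I_X)$ through the mapping cone. Since $S$ has one variable fewer than $R$, Auslander--Buchsbaum reads $\pd_R(R/I_X)+\depth_R(R/I_X)=N+1$ and $\pd_S(S/I_{{X_q}})+\depth_S(S/I_{{X_q}})=N$; as $\dim(R/I_X)=\dim(S/I_{{X_q}})=r+1$, statement (a) yields $\depth_R(X)=\depth_S({{X_q}})$ and the Cohen--Macaulay equivalence at once. Passing from $R/I$ to $I$ shifts projective dimension by one, so (a) is equivalent to the single assertion
$$\pd_S(I_{{X_q}})=\pd_R(I_X)-1.$$
Write $p:=\pd_R(R/I_X)$, so that $\Tor^R_i(I_X)=0$ for $i\ge p$ while $\Tor^R_{p-1}(I_X)\neq 0$, and recall the general bound $p\ge e$.

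First I would record the module structure that drives the argument. Regarding $I_X$ as an $R=S[x_0]$-module and using flat base change along $S\hookrightarrow R$, each $\Tor^S_i(I_X,k)\cong\Tor^R_i(I_X,k[x_0])$ is a \emph{finitely generated} graded module over $R\otimes_S k=k[x_0]$, on which the connecting map $\bar\mu$ of Theorem~\ref{MC_seq} is precisely multiplication by $x_0$. From the mapping cone sequence and $\Tor^R_i(I_X)=0$ for $i\ge p$ one gets that $x_0$ acts \emph{bijectively} on $\Tor^S_i(I_X)$ in every internal degree when $i\ge p$, and \emph{injectively} when $i=p-1$. Being bounded below and finitely generated over $k[x_0]$, such a module is zero in the first case and $k[x_0]$-free in the second; hence $\Tor^S_i(I_X)=0$ for $i\ge p$ and $\Tor^S_{p-1}(I_X)$ is free over $k[x_0]$.

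Next I would feed this into the basic sequence $0\to I_{{X_q}}\to I_X\to I_X/I_{{X_q}}\to 0$ together with Proposition~\ref{linear}~(b), which gives $\pd_S(I_X/I_{{X_q}})=e-1$, identifies these $\Tor$ groups with the Koszul homology of the linear forms $\ell_1,\dots,\ell_e$, and (via Remark~\ref{commute}) makes all the comparison maps $x_0$-linear, with $x_0$ acting injectively on the top group $\Tor^S_{e-1}(I_X/I_{{X_q}})$. For the \emph{vanishing}: since $p\ge e$ we have $\Tor^S_{p}(I_X/I_{{X_q}})=0$, so the long exact sequence realizes $\Tor^S_{p-1}(I_{{X_q}})$ as the $x_0$-linear kernel of $\Tor^S_{p-1}(I_X)\to\Tor^S_{p-1}(I_X/I_{{X_q}})$, i.e.\ as a $k[x_0]$-submodule of the free module $\Tor^S_{p-1}(I_X)$; being torsion-free and (as $I_{{X_q}}$ is a finite $S$-module) bounded in internal degree, it must be $0$. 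For the \emph{nonvanishing} at step $p-2$ when $p>e$: here $p-1\ge e$ forces $\Tor^S_{p-1}(I_X/I_{{X_q}})=0$, so the free module $\Tor^S_{p-1}(I_X)$ vanishes in high degree and hence is $0$; the mapping cone then gives an injection $\Tor^R_{p-1}(I_X)\hookrightarrow\Tor^S_{p-2}(I_X)$ whose nonzero image is exactly the $x_0$-torsion, and since the target $\Tor^S_{p-2}(I_X/I_{{X_q}})$ is $x_0$-torsion-free this torsion lands in $\ker\bigl(\Tor^S_{p-2}(I_X)\to\Tor^S_{p-2}(I_X/I_{{X_q}})\bigr)=\Tor^S_{p-2}(I_{{X_q}})$. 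Thus $\Tor^S_{p-2}(I_{{X_q}})\neq 0$ and $\pd_S(I_{{X_q}})=p-2$, which is (a).

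The delicate point, and where I expect the real work, is the boundary range where $p$ is close to $e$, since there the top Koszul term $\Tor^S_{e-1}(I_X/I_{{X_q}})$ occupies exactly the homological degrees that decide the answer. In the arithmetically Cohen--Macaulay case $p=e$ one cannot produce the nonvanishing class directly (indeed $\Tor^S_{e-1}(I_X)\neq 0$), and I would instead combine the uniform vanishing $\Tor^S_{e-1}(I_{{X_q}})=0$ with the lower bound $\pd_S(S/I_{{X_q}})\ge\codim(X_q,\P^{N-1})=e-1$ to force equality, recovering that $X_q$ is again arithmetically Cohen--Macaulay. Carefully tracking the $x_0$-torsion-versus-freeness bookkeeping over $k[x_0]$ in these edge cases, using the commutativity of $x_0$-multiplication from Remark~\ref{commute} and the explicit linear shape from Proposition~\ref{linear}~(b), is the heart of the proof; once this is in place, (a) holds in all cases and (b) is immediate from Auslander--Buchsbaum.
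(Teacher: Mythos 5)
Your proposal is correct and follows essentially the same route as the paper: both rest on the mapping cone sequence controlling the $x_0$-action on $\Tor^S_\bullet(I_X)$, the ideal sequence $0\to I_{{X_q}}\to I_X\to I_X/I_{{X_q}}\to 0$ together with the explicit resolution from Proposition~\ref{linear}~(b), the finite-dimensionality of $\Tor^S_\bullet(I_{{X_q}})$ to force vanishing, and the $x_0$-commutativity of Remark~\ref{commute} to detect the nonzero kernel at step $p-2$ (your ``$x_0$-torsion lands in $\Tor^S_{p-2}(I_{{X_q}})$'' is exactly the paper's non-injectivity of $f$). The only difference is presentational: by phrasing everything in terms of freeness versus torsion of $\Tor^S_i(I_X,k)$ as a finitely generated graded $k[x_0]$-module, you merge the paper's separate sub-cases $\alpha\ge 2$ and $\alpha=1$ into one uniform argument, which is a modest tidying rather than a new idea.
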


\begin{proof} (a)
We know $\pd_{R}(R/I_X)\ge e=\codim X$. Let $l$ be $\pd_{R}(R/I_X)$ (so, $l\ge e$), and $j_0=$max$\{j|\,\Tor^{R}_{l}(R/I_X)_{l+j}\neq0\}$.\\[1ex]
\verb"Case 1)"  Non-Cohen Macaulay case (i.e. $l=e+\alpha,~~~ \alpha\ge1$) :

First of all, we have the following diagram from the exact sequence (\ref{idealseq}):
$$\begin{array}{lccccccccc}
i\,\,\,of\,\,\Tor^{S}_{i}(S/I_{{X_q}})&&0\lra&I_{{X_q}}&\lra&I_X& \lra&I_X/I_{{X_q}}\lra0\\
&&&\uparrow&&\uparrow&&\uparrow&\\
1&&&\Box&&\Box&&S(-2)^{e}\oplus S(-3)^e\oplus\cdots&\\
2&&&\vdots&&\vdots&&\vdots&\\
&&&\uparrow&&\uparrow&&\uparrow&\\
e&&&\Box&&\Box&&S(-e-1)\oplus S(-e-2)\oplus\cdots&\\
&&&\uparrow&&\uparrow&&\uparrow&\\
e+1&&&\Box&\cong&\Box&&0&\\
\vdots&&&\vdots&\vdots&\vdots&&\vdots&\\
l=e+\alpha&&&\blacksquare&\cong&\blacksquare&&0&\\
\blacksquare:vanished&&&&&&&&\\
\end{array}$$

From this diagram, we get $\Tor^{S}_l(R/I_X)\cong\Tor^{S}_l(S/I_{{X_q}})$ as finite $k$-vector spaces. Since $\Tor^{R}_{l+1}(R/I_X)_{l+1+j}=0$
for all $j$ ($\because \pd_{R}(R/I_X)=l$) and $\Tor^{R}_{l}(R/I_X)_{l+j}=0$ for $j> j_0$, we can observe using the mapping cone sequence (\ref{MC_seq}) that
$$\cdots\ist{\times x_0}\Tor^{S}_l(R/I_X)_{l+j}\ist{\times x_0}\cdots\ist{\times x_0}\Tor^{S}_l(R/I_X)_{l+j_0}\isost{\times x_0}\cdots$$
So we have $\Tor^{S}_l(R/I_X)=0$, because it is finite dimensional. This means $\Tor^{S}_{l}(S/I_{{X_q}})\cong\Tor^{S}_{l}(R/I_X)=0$.\\

Next, we claim that $\Tor^{S}_{l-1}(S/I_{{X_q}})\neq0$, which implies $\pd_{S}(S/I_{{X_q}})=l-1$. If $\alpha\ge 2$, then we have $\Tor^S_{l-1}(S/I_{X_q})\simeq\Tor^S_{l-1}(R/I_X)$. Since $\Tor^{S}_{l}(R/I_X)=0$,  we have a nontrivial kernel of the $\times x_0$ map in $I_X$
from the mapping cone sequence (\ref{MC_seq})
$$\begin{array}{ccccccccc}
0\lra&\Tor^{R}_l(R/I_X)_{l+j_0}\hookrightarrow&\Tor^{S}_{l-1}(R/I_X)_{l-1+j_0}\sts{\times x_0}&\Tor^{S}_{l-1}(R/I_X)_{l-1+j_0+1}&\cdots(\ast)\\
&\nparallel&\parallel\rlap{$\wr$}&\parallel\rlap{$\wr$}&&\\
&0&\Tor^S_{l-2}(I_X)_{l-1+j_0}&\Tor^S_{l-2}(I_X)_{l-1+j_0+1}&&\\
\end{array}$$

This implies $\Tor^S_{l-1}(R/I_X)\simeq\Tor^S_{l-1}(S/I_{X_q})\neq 0$ as wished. So, let us focus on the case $\alpha=1$ and so, $l=e+1$.
Consider the following sequence and commutative diagram:
$$\Tor^{S}_{l-1}(I_X/I_{{X_q}})=0\rightarrow\Tor^{S}_{l-2}(I_{X_q})\rightarrow \Tor^{S}_{l-2}(I_X)\rightarrow\Tor^{S}_{l-2}(I_X/I_{{X_q}})\rightarrow\cdots$$
$$
\begin{array}{cccccccccccccccccccc}
\Tor_{l-2}^{S}(I_X)_{e+j_0}\simeq & S(-e-j_0)^{\Box}\otimes k & \sts{f_{e+ j_0}} & S(-e-j_0)\otimes k&\simeq\Tor^{S}_{l-2}(I_X/I_{{X_q}})_{e+j_0}\\
& \llap{$h$}\Big\downarrow\rlap{{not injective}} &&
\llap{$g$}\Big\downarrow{\rlap{$\wr$}} \\
& S(-c)\otimes k & \stackrel{\sim}{\lra} & S(-c)\otimes k\\
\end{array}
$$
where $h,g$ are induced by the multiplication of $x_{0}^{n}$ and $g$ is an isomorphism.
To check $\Tor^{S}_{l-1}(S/I_{{X_q}})\cong\Tor^{S}_{l-2}(I_{{X_q}})\neq0$,
it is enough to show that $f:\Tor^{S}_{l-2}(I_X)\lra\Tor^{S}_{l-2}(I_X/I_{{X_q}})$ is not injective because $\Tor^{S}_{l-1}(I_X/I_{{X_q}})=0$.
Now let me explain why $f$ be not injective. We get the below isomorphism map for $c\gg0$ because $\Tor^{S}_i(I_{{X_q}})$
are finite-dimensional graded vector spaces and also $h$ is not injective by $(\ast)$. From Remark (\ref{commute}), this diagram
commutes and $f_{e+ j_0}$ has a nontrivial kernel. Hence $f:\Tor^{S}_{l-2}(I_X)\lra\Tor^{S}_{l-2}(I_X/I_{{X_q}})$ is not injective and  $\Tor^S_{l-1}(S/I_{X_q})_{e+j_0}=\Tor^{S}_{l-2}(I_{X_q})_{e+j_0}\neq0$.\\[1ex]

\verb"Case 2)"  Cohen-Macaulay case (i.e. $l=e,~~ \alpha=0$) :
In this case, we have the long exact sequence on $\Tor$ as follows:
$$
\begin{array}{cccccccccccccccccccc}
&&&&&&S(-e-1)\oplus\\
0=\Tor^{S}_{e}(I_X/I_{{X_q}})&\lra&\Tor^{S}_{e-1}(I_{{X_q}})&\lra&\Tor^{S}_{e-1}(I_X)&\st{f}&S(-e-2)\oplus\\
&&&&&&\cdots\\
\end{array}
$$
Since $\pd_{R}(R/I_X)=e$, $\Tor^{R}_{e+1}(R/I_X)=0$ and we have an injection
$$\Tor^{S}_{e-1}(I_X)_{e+j}=\Tor^{S}_{e}(R/I_X)_{e+j}\st{\times x_0^n}\Tor^{S}_{e}(R/I_X)_{e+j+n}=\Tor^{S}_{e-1}(I_X)_{e+j+n}$$
for any $j,n$ from the mapping cone sequence (\ref{MC_seq}). By almost same argument using the commuting diagram as \verb"Case 1)", $\alpha=1$,
we can conclude that $f:\Tor^{S}_{e-1}(I_X)\lra\Tor^{S}_{e-1}(I_X/I_{{X_q}})$ is injective and $\Tor^{S}_{e}(S/I_{{X_q}})=0$.
So, this means $\pd_{S}(S/I_{{X_q}})\le e-1$.
But we know $\pd_S(S/I_{X_q}) \ge \codim({{X_q}})=e-1$, therefore $\pd_{S}(S/I_{{X_q}})=e-1$.
On the other hand, (a) implies that $\depth_{R}(X)=\depth_{S}({{X_q}})$ by Auslander-Buchsbaum formula.
\end{proof}

\begin{Remk}\label{depth_remk}
Let $X\subset \P^n$ be a reduced scheme satisfying property $\textbf{N}_{2,p} ~(p\ge 2)$.
Let $\Sigma_q(X)=\{ x\in X \,|\,{\pi_q}^{-1}(\pi_q(x)) \text{ has length }\geq 2\}$ is the \textit{secant locus} of the outer projection.
We would like to remark that $\depth(X_q)=\min\{\depth(X), \dim\Sigma_q(X)+2\}$ for a smooth scheme $X$ (see \cite {AK,P}).
On the other hand, it would be interesting to ask the following question: Is there an example such that $\depth(X_q)\neq \depth(X)$
for inner projections?
\end{Remk}

As an interesting application of above results, we can also prove the following rigidity theorem for the \textit{extremal} ($i.e.~p=e$) and \textit{next to extremal} ($i.e.~p=e-1$) cases of property $\textbf{N}_{2,p}$ of the varieties by using Corollary \ref{Main result N_{2,p}}, Proposition \ref{quad_dim} and Theorem ~\ref{depth}.

\begin{Thm}{\bf(Syzygetic rigidity on property $\textbf{N}_{2,p}$})\label{rigidity}
Let $X$ be a nondegenerate $r$-dimensional variety (i.e. irreducible, reduced) in $\P^{r+e}$,  $e=\codim(X,\P^{r+e})$.
\begin{itemize}
\item[(a)](extremal case) $X$ satisfies property $\textbf{N}_{2,e}$ if and only if $X$ is a minimal degree variety, i.e. a whole linear space $\P^{r+e}$, a quadric hypersurface, a cone of the Veronese surface in $\P^5$, or rational normal scrolls;
\item[(b)](next to extremal case) $X$ fails property $\textbf{N}_{2,e}$ but satisfies $\textbf{N}_{2,e-1}$ if and only if $X$ is a del Pezzo variety, i.e. $X$ is arithmetically Cohen-Macaulay (\textit{abbr.} ACM) and is of next to minimal degree.
\end{itemize}
\end{Thm}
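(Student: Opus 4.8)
The plan is to handle both extremal ranges with a single engine --- successive inner projection from general (hence smooth) points --- fed by the three facts already in hand: the drop $\textbf{N}_{2,p}\rightsquigarrow\textbf{N}_{2,p-1}$ under inner projection (Corollary~\ref{Main result N_{2,p}}), the exact loss of quadrics (Proposition~\ref{quad_dim}), and the invariance of depth, equivalently of the arithmetically Cohen--Macaulay (ACM) property (Theorem~\ref{depth}). At every stage I use that a general point of an irreducible reduced variety is smooth, and that an inner projection of such a variety stays irreducible, reduced and nondegenerate, so that all three tools keep applying.

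For part~(a) I would first dispose of the easy implication: a variety of minimal degree carries a $2$-linear (Eagon--Northcott) resolution, so it satisfies $\textbf{N}_{2,p}$ for every $p$, in particular $\textbf{N}_{2,e}$. For the converse I do not even need projections: by Corollary~\ref{LB} with $p=e$ one has $\beta_{0,2}(I_X)\ge \mathrm{LB}_e=e\cdot e-\tfrac{e(e-1)}{2}=\binom{e+1}{2}$, while by the upper bound recalled in Remark~\ref{LB_remk} one always has $\beta_{0,2}(I_X)\le\binom{e+1}{2}$ for integral $X$, with equality \emph{only} for varieties of minimal degree. Hence $\textbf{N}_{2,e}$ forces equality, and the explicit list follows from the classical del Pezzo--Bertini classification.

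For part~(b) the heart is the ``only if'' direction, where I run the projection engine. Assume $X$ satisfies $\textbf{N}_{2,e-1}$ but not $\textbf{N}_{2,e}$. Projecting from a general smooth point gives $X^{(1)}\subset\P^{r+e-1}$ of codimension $e-1$, which by Corollary~\ref{Main result N_{2,p}} still satisfies $\textbf{N}_{2,e-2}$ and is cut out by quadrics, while Proposition~\ref{quad_dim} yields $\beta_{0,2}(I_{X^{(1)}})=\beta_{0,2}(I_X)-e$. Iterating, and reading part~(a) as the statement that the extremal quadric count $\binom{c+1}{2}$ in codimension $c$ is equivalent to minimal degree, I expect a \emph{self-similar} picture: after $k$ steps $X^{(k)}$ has codimension $e-k$, satisfies $\textbf{N}_{2,e-1-k}$ but fails $\textbf{N}_{2,e-k}$, and --- via the telescoping sum $\sum_{j=0}^{k-1}(e-j)=ek-\tfrac{k(k-1)}{2}$ --- has exactly $\beta_{0,2}(I_{X^{(k)}})=\binom{e+1-k}{2}-1$ quadrics, one short of the extremal value, so no $X^{(k)}$ is of minimal degree. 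Taking $k=e-2$ lands on the base case: $X^{(e-2)}\subset\P^{r+2}$ is an irreducible nondegenerate codimension-$2$ variety generated by exactly two quadrics, hence a complete intersection of two quadrics --- in particular ACM, of degree $4=\codim+2$, i.e. the codimension-$2$ del Pezzo variety. I then climb back up: since the ACM property is preserved in both directions (Theorem~\ref{depth}(b)) and $X^{(e-2)}$ is ACM, each $X^{(e-3)},\dots,X^{(0)}=X$ is ACM, and since an inner projection from a smooth point drops the degree by exactly one (the map being birational with $\mult_q=1$), we get $\deg X=\deg X^{(e-2)}+(e-2)=e+2$. Thus $X$ is ACM of next-to-minimal degree, i.e. del Pezzo, and I finish by quoting the classification of del Pezzo varieties. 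The reverse implication of~(b) is again the easy one: the resolution of a del Pezzo variety is linear up to the $(e-1)$-st step with a single non-linear entry in the last step, giving exactly $\textbf{N}_{2,e-1}$ but not $\textbf{N}_{2,e}$.

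The main obstacle I anticipate is making the self-similar induction of part~(b) airtight: one must guarantee at every stage that a general point is smooth, that the projected scheme stays reduced, irreducible and nondegenerate so that Proposition~\ref{quad_dim} and part~(a) keep applying, and that the projection is genuinely birational so the degree really falls by one. The depth invariance of Theorem~\ref{depth}(b) is precisely what rescues the ACM property, which a bare degree/syzygy count cannot see; and the final passage to the geometric lists rests on the external del Pezzo--Bertini and almost-minimal-degree classifications.
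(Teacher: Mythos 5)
Your proposal is correct and runs on the same engine as the paper's proof---successive inner projections from smooth points, fed by Corollary~\ref{Main result N_{2,p}}, Proposition~\ref{quad_dim} and Theorem~\ref{depth}---but the two arguments diverge in the details. For part (a) the paper never invokes $\textrm{LB}_e$: it introduces the invariant $\delta(X)=\deg(X)-\codim(X)$, notes that $\delta$ is preserved under inner projection from a smooth point, projects all the way down to a hypersurface $X^{(e-1)}$ that must still be an irreducible quadric, and reads off $\delta(X)=1$; your route instead squeezes $\beta_{0,2}(I_X)$ between $\textrm{LB}_e=\binom{e+1}{2}$ and Zak's upper bound $\binom{e+1}{2}$, importing Zak's ``equality iff minimal degree''. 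Both are valid; the paper's is more self-contained at that step, while yours avoids the last projection. For part (b) the arguments essentially coincide (complete intersection of two quadrics at the bottom, ACM pulled back up via Theorem~\ref{depth}, degree recovered from the drop-by-one), except that the paper again uses $\delta$-invariance for the degree count and proves the converse direction via the curve section of a del Pezzo variety rather than by quoting the shape of the minimal free resolution (which is Hoa's theorem, cited later in the paper). Two small points to tighten: your assertion that each $X^{(k)}$ \emph{fails} $\textbf{N}_{2,e-k}$ does not follow from Corollary~\ref{Main result N_{2,p}}, which only bounds the syzygies of the projection from below---fortunately you never actually use it; and the exact count $\beta_{0,2}(I_{X^{(k)}})=\binom{e+1-k}{2}-1$ requires first pinning down $\beta_{0,2}(I_X)=\binom{e+1}{2}-1$, which comes from combining $\textrm{LB}_{e-1}$ with part (a) and Zak's bound, not from the telescoping sum alone.
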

\begin{proof}
Let $\delta(X):=\deg(X)-\codim(X)$ for any subvariety $X\subset \P^{r+e}$. Note that $\delta(X_q)=\delta(X)$ under an inner projection from a smooth point $q\in X$. Take successive inner projections from smooth points. Call the images (Zariski closure) $X=X^{(0)}, X^{(1)},\ldots,X^{(e-2)}$ and $I^{(i)}$ for the elimination ideal of $I^{(i-1)}$ cutting out $X^{(i)}$ scheme-theoretically. By Corollary \ref{Main result N_{2,p}} we know that this $X^{(e-2)}$ has $\codim~2$ and have property $\textbf{N}_{2,2}$ for (a) ~(and $\textbf{N}_{2,1}$ for (b), respectively). Because $X^{(e-1)}$ is a hypersurface, by Proposition \ref{quad_dim} the possible $\beta_{0,2}(I^{(e-2)})=2$ or $3$.
For the case (a), take an inner projection once more and then $X^{(e-1)}$ still satisfies property $\textbf{N}_{2,1}$, i.e an (irreducible) quadric hypersurface. So, $$\beta_{0,2}(I^{(e-1)})=1,\,  \beta_{0,2}(I^{(e-2)})=1+2=3 ~\textrm{and}~ \delta(X)=\delta(X^{(e-1)})=1 .$$ That is, $X$ is of minimal degree.
In the case of (b), $X^{(e-2)}$ is a complete intersection of two quadrics in $\P^{r+2}$ and $X^{(e-1)}$ should be a cubic hypersurface.

In particular, the projective dimension of $X^{(e-2)}$ is equal to $2=\pd_{R}(R/I_X)-(e-2)$ by Theorem ~\ref{depth}.
Therefore,
$$\beta_{0,2}(I^{(e-1)})=0, \, \beta_{0,2}(I^{(e-2)})=2,\, \delta(X)=\delta(X^{(e-1)})=2 ~\textrm{and}~ \pd_{R}(R/I_X) = e,$$
which means $X$ is arithmetically Cohen-Macaulay and of next to minimal degree with $H^0(\mathcal I_X(2))=\frac{(e+2)(e-1)}{2}$.
By the well-known classification of varieties of next to minimal degree, $X$ is a del Pezzo variety. On the other hand, the curve section $C$ of a del Pezzo variety
$X$ is either an elliptic normal curve or a projection of a rational normal curve from a point in $\Sec(C)\setminus C$.  Since $X$ and $C$ have the same Betti table, $X$ satisfies property $\textbf{N}_{2,e-1}$ but fail property $\textbf{N}_{2,e}$.
\end{proof}

\begin{Remk}\label{rem_rig} There are some remarks on Theorem \ref{rigidity}.
\begin{itemize}
\item[(a)] For the smooth projectively normal variety $X$, M. Green's $\mathcal{K}_{p,1}$-theorem in \cite{G2} gives a necessary condition on Theorem \ref{rigidity} (b) (i.e. $X$ is either a variety of next to minimal degree \textit{or} a divisor on a minimal degree). Using our Corollary \ref{Main result N_{2,p}} and \textit{Depth} theorem \ref{depth}, we could obtain the results on both $\deg(X)=\codim X +2$ and ACM property and show the rigidity on next to extremal case for \textit{any} (not necessarily projectively normal) variety $X$.
\item[(b)] Classically, normal del Pezzo varieties were classified by T. Fujita in \cite {F}. And every non-normal del Pezzo variety $X$ (see \cite{BS,F}) comes from outer projection of a minimal degree variety $\widetilde{X}$ from a point $q$ in $\Sec(\widetilde{X})\setminus \widetilde{X}$ satisfying $\dim\Sigma_q(\widetilde{X})=\dim\widetilde{X}-1$ (see remark \ref{depth_remk}). Note that the dimension of the secant locus $\Sigma_q(\widetilde{X})$ varies as $q$ moves in $\Sec(\widetilde{X})\setminus\widetilde{X}$. Thus one can try to classify the non-normal del Pezzo varieties by the type of the secant loci $\Sigma_q(\widetilde{X})$. This is recently classified in \cite{BP} such that there are only $8$ types of non-normal del Pezzo varieties which are not cones. For example, we find projections of a smooth cubic surface scroll $S(1,2)$ in $\P^4$ from any $q\in\P^4\setminus S(1,2)$ or projections of a smooth 3-fold scroll $S(1,1,c)$ in $\P^{c+4}$ with $c>1$ from any $q\in\langle S(1,1)\rangle\setminus S(1,1,c)$, etc.
\end{itemize}

\end{Remk}

Furthermore, let's consider the followig category. (We say that an algebraic set $X=\cup X_i$ is \textit{connected in codimesion 1} if all the component $X_i$'s can be arranged in such a way that every $X_i\cap X_{i+1}$ is of codimension 1 in $X$.)
$$\{\text{equidimensional, connected in codimension 1, reduced subschemes in}~ \P^{r+e}\}\cdots(\ast)$$

In the category $(\ast)$, we have natural notions of $\dim X$ and $\deg(X)$ which is the sum of degrees of $X_i$'s. And as in the category of \textit{varieties}, we also have the `basic' inequality of degree, i.e.$~\deg(X)\ge \codim X + 1$, so it is worthwhile to think of `minimal degree' or `next to minimal degree' in this category.

Using same methods, the Theorem \ref{rigidity}  can be easily extended for this category. We call $X$ (or the sequence) \textit{linearly joined} whenever all the components can be ordered $X_1,X_2,\ldots,X_k$ so that for each $i
$, $(X_1\cup\cdots\cup X_i)\cap X_{i+1}=\textrm{span}(X_1\cup\cdots\cup X_i)\cap\textrm{span}(X_{i+1})$. Then, we have a corollary as follows:

\begin{Coro}\label{rigidity_2}
Let $X$ be a nondegenerate subscheme in the category $(\ast)$ with $e=\codim(X,\P^{r+e})$.
\begin{itemize}
\item[(a)](extremal case) $X$ satisfies property $\textbf{N}_{2,e}$ if and only if $X$ is 2-regular, i.e. the linearly joined squences of $r$-dimensional minimal degree varieties;
\item[(b)](next to extremal case) If $X$ fails property $\textbf{N}_{2,e}$ but satisfies $\textbf{N}_{2,e-1}$, then $X$ is arithmetically Cohen-Macaulay  and is of next to minimal degree.
\end{itemize}
\end{Coro}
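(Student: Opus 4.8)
The plan is to mirror the proof of Theorem~\ref{rigidity}, replacing the single irreducible variety by a scheme $X$ in the category $(\ast)$ and carrying out successive inner projections from general smooth points. As before I would set $\delta(X)=\deg(X)-\codim(X)$, where now $\deg(X)=\sum_i\deg(X_i)$ is the sum of the degrees of the equidimensional components; one checks that projection from a smooth point $q$ lowers both $\deg$ and $\codim$ by one, so $\delta$ is preserved. Because $X$ is reduced and equidimensional, a general point of a fixed top-dimensional component that avoids the other components and the singular locus is a smooth point of $X$, so such centres are always available; and since $I_{X_q}=I_X\cap S$ is radical whenever $I_X$ is, each image $X_q$ is again reduced. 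The results of the previous section (Corollary~\ref{Main result N_{2,p}}, Proposition~\ref{quad_dim}, Theorem~\ref{depth}) apply here because they were proved for reduced connected schemes with a smooth centre, and connectedness in codimension one in particular implies connectedness.

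For part (a) the easy implication is that a $2$-regular scheme has $\reg(R/I_X)=1$, hence a purely linear resolution, so it satisfies $\textbf{N}_{2,p}$ for every $p$ and in particular $\textbf{N}_{2,e}$. For the converse I would take $e-1$ successive inner projections $X=X^{(0)},\dots,X^{(e-1)}$. By Corollary~\ref{Main result N_{2,p}} each $X^{(i)}$ is cut out by quadrics and satisfies $\textbf{N}_{2,e-i}$, so $X^{(e-1)}$ is a non-degenerate codimension one scheme with $\textbf{N}_{2,1}$, i.e.\ a (possibly reducible) quadric hypersurface, whence $\delta(X)=\delta(X^{(e-1)})=1$ and $X$ has minimal degree. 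Applying Theorem~\ref{depth}(a) at each step gives $\pd_R(R/I_X)=\pd_S(S/I_{X^{(e-1)}})+(e-1)=1+(e-1)=e=\codim X$, so $X$ is arithmetically Cohen--Macaulay. Since the minimal resolution is linear up to step $e$ and has length $e$, it is entirely linear, whence $\reg(R/I_X)=1$ and $X$ is $2$-regular. The geometric description as a linearly joined sequence of minimal degree varieties is then exactly the classification of reduced $2$-regular schemes of Eisenbud, Green, Hulek and Popescu \cite{EGHP1}.

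For part (b), assume $X$ satisfies $\textbf{N}_{2,e-1}$ but fails $\textbf{N}_{2,e}$. Projecting $e-2$ times produces a codimension two scheme $X^{(e-2)}$ with $\textbf{N}_{2,1}$, cut out by quadrics. By part (a), $X$ cannot be of minimal degree (otherwise it would be $2$-regular and satisfy $\textbf{N}_{2,e}$), so $\delta(X)\ge 2$. Projecting once more gives a non-degenerate hypersurface $X^{(e-1)}$ of degree $\delta(X)+1\ge 3$, which therefore carries no quadric, i.e.\ $\beta_{0,2}(I^{(e-1)})=0$. Proposition~\ref{quad_dim} then forces $\beta_{0,2}(I^{(e-2)})=2$, so $X^{(e-2)}$ is the complete intersection of two quadrics; in particular it is arithmetically Cohen--Macaulay of degree $4$, giving $\delta(X)=\delta(X^{(e-2)})=2$ and, by Theorem~\ref{depth}(a) applied $e-2$ times, $\pd_R(R/I_X)=2+(e-2)=e=\codim X$. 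Hence $X$ is arithmetically Cohen--Macaulay and of next to minimal degree, as claimed.

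The step I expect to be the main obstacle is the bookkeeping that keeps the construction inside the category $(\ast)$: one must verify that projecting from a general smooth point neither contracts a component (so that equidimensionality and the degree--codimension count survive) nor destroys connectedness in codimension one, and that the resulting reduced scheme can again be ordered as a linearly joined sequence. Everything else is a transcription of the arguments for Theorem~\ref{rigidity} together with the invocation of the $2$-regularity classification of \cite{EGHP1}; the only genuinely new content is this geometric stability of the category under inner projection, for which a sufficiently general choice of centre at each stage should suffice.
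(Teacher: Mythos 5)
Your proposal follows essentially the same route as the paper's own proof: successive inner projections from smooth points down to a (reducible) hyperquadric, combined with Corollary~\ref{Main result N_{2,p}}, Proposition~\ref{quad_dim} and Theorem~\ref{depth}, and the classification of reduced $2$-regular schemes (\cite{EGHP1}, \cite{EGHP2}) for the geometric description in (a). You are in fact more explicit than the paper about the one genuinely delicate point---that the category $(\ast)$ is stable under projection from a suitably general smooth point---which the paper leaves implicit.
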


\begin{proof}
For (a) we can also project $X$ to a hyperquadric $X^{(e-1)}$ similarly (In this case $X^{(e-1)}$ is reducible, i.e. a union of two $r$-linear planes and every component of $X$ degenerates into this linear subspaces). So $X^{(e-1)}$ is ACM, and from our Depth theorem \ref{depth} $X$ is also ACM, eventually 2-regular. We also have a similar result for the case of (b) by same arguments; $X$ becomes ACM and of next to minimal degree subscheme with $h^0(\mathcal I_X(2))=\frac{(e+2)(e-1)}{2}$ in this category.
\end{proof}

\begin{Remk}\label{remk_next_rigid}
For the `rigidity' of property $\textbf{N}_{2,p}$ for $p=e$, D. Eisenbud et al. proved the same theorem for the category of algebraic set more generally in \cite{EGHP1}. The (geometric) classification of 2-regularity is well-known for varieties, and for general algebraic sets it is given in \cite{EGHP2}. We reprove this rigidity (case (a)) using our inner projection method and for next to extremal case (b) we also get similar classification for the subschemes in the category $(\ast)$ (see Question \ref{Q_classify}).
\end{Remk}

\section{Examples and Open questions}\label{section_5}
It seems to be quite natural to find out a good inner projection as we move the point $q\in X$ in many aspects.  What happens to inner projections from singular points? During the discussions with P. Schenzel, we have the following example;
\begin{Ex}(Projection from a \textit{singular} point, discussion with
P. Schenzel)\label{schenzel}\\
Let us consider a singular surface $X$ in $\P^{14}$ by Segre embedding of
quadric in $\P^2$ and singular quintic rational curve in $\P^4$ (Note that `-' means ``zero" in the Betti table);

\begin{eqnarray}\label{table1}
& \begin{array}{|c|c|c|c|c|c|c|c|c|c|c|c|c|c|c|c|c|c|c|c|c|c|c|c|c|c}\hline
              & 0 & 1   & 2     & 3     & 4     & 5     &  \cdots   &  i  &  \cdots \\[1ex]
\hline
0   & 1 & -   & -     & -     & -     & -     & \cdots  &  - &  \cdots \\[1ex]
\hline
1  & - & 70 & 475 & 1605 &  3333 &  4500 & \cdots & \beta_{i,1} &  \cdots \\[1ex]
\hline
2 & - & -   & -    & 11    & 100     & 405      &   \cdots    & \beta_{i,2}  &  \cdots \\[1ex]
\hline
3 & - & -   & -    & -   & -     & -      &   \ddots    & \beta_{i,3}  &   \ddots  \\[1ex]
\hline
\end{array} &
\end{eqnarray}
\begin{center}
\textbf{Table \ref{table1}} A singular surface $X$ in $\P^{14}$ (computed by {\sc Singular})
\end{center}\bigskip 
We see that $X$ satisfies property $\textbf{N}_{2,2}$. Now consider inner projections of $X$ from (a) a smooth point and (b) any singular point of $X$ (we can't distinguish the singularities).

\begin{eqnarray}\label{table2}
(a)~~~\begin{array}{|c|c|c|c|c|c|c|c|c|c|c|c|c|c|c|c|c|c|c|c|c|c|c|c|c|c}\hline
              & 0 & 1   & 2     & 3     & 4     \\[1ex]
\hline
0   & 1 & -   & -     & -     & -      \\[1ex]
\hline
1  & - & 58 & 351 & 1035 & \cdots   \\[1ex]
\hline
2 & - & -   & 1    & 19    & \cdots       \\[1ex]
\hline
3 & - & -   & -    & -    & \ddots       \\[1ex]
\hline
\end{array}
&\phantom{X}&
(b)~~~\begin{array}{|c|c|c|c|c|c|c|c|c|c|c|c|c|c|c|c|c|c|c|c|c|c|c|c|c|c}\hline
              & 0 & 1   & 2     & 3     & 4     \\[1ex]
\hline
0   & 1 & -   & -     & -     & -      \\[1ex]
\hline
1  & - & 59 & 362 & 1089 & \cdots  \\[1ex]
\hline
2 & - & -   & -    & 10    &  \cdots    \\[1ex]
\hline
3 & - & -   & -    & -    & \ddots       \\[1ex]
\hline
\end{array}
\end{eqnarray}
\begin{center}
{\bf Table \ref{table2}}  (a) from smooth point, (b) from any singular point
\end{center}\bigskip
While $X_q$ property $\textbf{N}_{2,1}$ holds in case (a) as our Corollary \ref{Main result N_{2,p}}
says, in case (b) we still have property $\textbf{N}_{2,2}$ for $X_q$ !\\
\end{Ex}

\begin{Ex}
Let $X$ be the Grassmannian $\mathbb{G}(2,4)$ in $\P^9$, $6$-
dimensional del Pezzo variety of degree $5$ whose Betti table is

\begin{eqnarray}\label{table3}
&\begin{array}{|c|c|c|c|c|c|c|c|c|c|c|c|c|c|c|c|c|c|c|c|c|c|c|c|c|c}\hline
              & 0 & 1   & 2     & 3     \\[1ex]
\hline
0   & 1 & -   & -     & -        \\[1ex]
\hline
1  & - & 5 & 5 & -    \\[1ex]
\hline
2 & - & -   & -    & 1        \\[1ex]
\hline
 \end{array}&
\end{eqnarray}
\begin{center}
{\bf Table \ref{table3}} the Grassmannian $\mathbb{G}(2,4)$ in $\P^9$
\end{center}\bigskip
and property $\textbf{N}_{2,2}$ is satisfied. Since it is homogeneous
and covered by lines, so we can choose any (smooth) point $q$ in $X$
and a line $\ell$ through $q$ in $X$. Then the projection $X_q$ is a
complete intersection of two quadrics in $\P^8$ (property $
\textbf{N}_{2,1}$) and $q^\prime=\pi_q(\ell)$ becomes a singularity of
multiplicity $2$ in $X_q$. If we project this one more from $q^\prime
$, then the projected image becomes a quadric hypersurface in $\P^7$
still satisfying property $\textbf{N}_{2,1}$.
\end{Ex}

\begin{Qu}(Inner projection from a singular point)\label{Q_sing}
Assume that $X$ be a nondegenerate projective scheme with $\textbf{N}_{2,p}$. If $q\in X$ is singular, we could expect
that the inner projection from $q$ has more complicate aspects, but shows better behavior still satisfying $\textbf{N}_{2,p}$
in many experimental data.
What can happen to the projection from singular locus in general?
\end{Qu}

 Next, we consider Problem (a) of the introduction in general. Let $X$ be a nondegenerate  subscheme with property
 $\textbf{N}_{2,p}$. If $\ell$ meets $X$ but is not contained in $X$, then we can regard the projection $\pi_{\ell}$ as the
 composition of two simple projections from points $q_1, q_2$. Furthermore, if such $\ell$ meets $X$ at smooth two points, then
 $X_{\ell}=\overline{\pi_{\ell}(X\setminus{\ell})}$ satisfies property $\textbf{N}_{2,p-2}$ by our main Theorem.

But not the case of $\ell\subset X$ we can treat simply, because $q_2=\pi_{q_1}(\ell)$ might be a singular point
even if $\pi_{\ell} =\pi_{q_2}\circ\pi_{q_1}$. In this case, we give an interesting example showing that
the Betti numbers of  $X_{\ell}=\overline{\pi_{\ell}(X\setminus{\ell})}$ are related to the geometry of the line in $X$.

\begin{Ex}(Projection from a line \textit{inside} the variety)
Consider the Segre embedding $X=\sigma(\P^2\times\P^4)$, 6-fold of
degree $15$ in $\P^{14}$ having property $\textbf{N}_{2,3}$ whose
Betti table is

\begin{eqnarray}\label{table4}
& \begin{array}{|c|c|c|c|c|c|c|c|c|c|c|c|c|c|c|c|c|c|c|c|c|c|c|c|c|c}\hline
              & 0 & 1   & 2     & 3     & 4     & 5     & 6 & 7 & 8   \\[1ex]
\hline
0   & 1 & -   & -     & -     & -     & - & -   & -  &-  \\[1ex]
\hline
1  & - & 30 & 120 & 210 & 168 & 50 & - &- & -  \\[1ex]
\hline
2 & - &  - & -& -& 50 & 120 & 105 & 40 & 6   \\[1ex]
\hline
\end{array} &~.
\end{eqnarray}
\begin{center}
{\bf Table \ref{table4}}  Segre embedding $X=\sigma(\P^2\times\P^4)$, 6-fold of
degree $15$ in $\P^{14}$
\end{center}\bigskip
In $X$ there are two type of contained lines, so called $\ell_1$ and $\ell_2$. If we take $\ell_1$ as the line $\sigma(\{pt\}\times \ell)$ in $X$,
then the image $X_{\ell_1}$ is the intersection of two cones $\langle
\sigma(\P^1\times\P^4),\P^2\rangle$ and $\langle \P^3,\sigma(\P^2\times\P^2)\rangle$ which is a 6-fold of degree $12$
in $\P^{12}$ satisfying property $\textbf{N}_{2,2}$ with the Betti
table as in Table \ref{table5} (a).

On the other hand, in case of the line $\ell_2=\sigma(\ell\times\{pt\})$,
$X_{\ell_2}$ is a $6$-dimensional cone $\langle \{pt\},\sigma(\P^2\times\P^3)\rangle$
of degree $10$ in $\P^{12}$ and has its own Betti table as in Table \ref{table5} (b) with property $\textbf{N}_{2,3}$.
Note that the dimension of the span $\langle\cup_{q\in\ell_1}T_q X\rangle$ of tangent spaces along $\ell_1$  is $8$, but
$\dim\langle\cup_{q\in\ell_2}T_q X\rangle=10$ (i.e. the tangent spaces change
more variously along $\ell_2$ than $\ell_1$). So, it is expected that $\ell_2$
is geometrically less movable than $\ell_1$ inside $X$ and $X_{\ell_2}$ has more linear syzygies.

{\setlength\arraycolsep{2pt}
\begin{eqnarray}\label{table5}
(a)~~~\begin{tabular}{|c|c|c|c|c|c|c|c|c|c|c|c|c|c|c|c|c|c|c|c|c|c|c|c|c|c}\hline
              & 0 & 1   & 2     & 3     & 4 &5&6    \\[1ex]
\hline
0   & 1 & -   & -     & -     & -  &-&-    \\[1ex]
\hline
1  & - & 16 & 40 & 30 & 4 & - & -  \\[1ex]
\hline
2 & - & - & -& 20 & 40 & 24 & 5  \\[1ex]
\hline
\end{tabular}
&&
(b)~~~\begin{tabular}{|c|c|c|c|c|c|c|c|c|c|c|c|c|c|c|c|c|c|c|c|c|c|c|c|c|c}\hline
              & 0 & 1   & 2     & 3     & 4  &5&6  \\[1ex]
\hline
0   & 1 & -   & -     & -     & -  &-&-   \\[1ex]
\hline
1  & - & 18 & 52 & 60 & 24 & - & - \\[1ex]
\hline
2 & - & - & -& - & 10 & 12 & 3   \\[1ex]
\hline
\end{tabular}
\end{eqnarray}}
\begin{center}
{\bf Table \ref{table5}}  (a) from a line $\ell_1$ of type 1, (b) from a line $\ell_2$ of type 2
\end{center}
\end{Ex}

\begin{Qu}(Inner projection from a subvariety)\label{Q_subvar}
Let $X$ be a nondegenerate reduced scheme in $\P^N$
satisfying property $\textbf {N}_{2,p}~(p>1)$ which is not necessarily
linearly normal. Consider the inner projection from a line $\ell \subset X$. Is it true that $\overline{\pi_{\ell}(X\setminus{\ell})}$ satisfies at least $\textbf {N}_{2, p-2}$?
How does the \textit{infinitesimal} geometry of $\ell$ in $X$ effect to the syzygies of $\overline{\pi_{\ell}(X\setminus{\ell})}$?
More generally, how about the projection from a subvariety $Y$ of $X$? The projection from $Y$ is defined by the projection
from $\Lambda:=\langle Y\rangle$, the linear span of $Y$ (see \cite{BHSS}). Say $\dim \Lambda=t < p$. Does $X_\Lambda$ in $\P^{N-t-1}$
satisfy property $\textbf {N}_{2, p-t-1}$ in general as raised in the problem list (a) in the introduction?
\end{Qu}
For the sake of Question \ref{Q_subvar}, we expect to need developing the elimination mapping cone theorem and partial elimination module
theory for multivariate case and calculating on the syzygies of those partial elimination modules by Gr\"obner basis theory for graded modules.
See \cite{HK2} for basic settings and some partial results for bivariate case.\\

Finally, we have the following question as rasied in Remark \ref{remk_next_rigid}.

\begin{Qu}(Geometric characterization of some $3$-regular ACM schemes)\label{Q_classify}
We showed in Section \ref{section_3.5} that if a $r$-equidimensional, reduced and connected in codimension 1 subscheme $X$ in $\P^{r+e}$ fails property
$\textbf{N}_{2,e}$ but satisfies $\textbf{N}_{2,e-1}$, then it is a ACM, $3$-regular scheme of next to minimal degree
(i.e. $\deg(X)=\codim X + 2$) with $h^0(\mathcal I_X(2))=\frac{(e+2)(e-1)}{2}$. Further, a theorem of L.T. Hoa in \cite{HOA} gives the complete
graded Betti numbers of these schemes as follows:
$$0\to R(-e-2)\to R^{\beta_{e-2,2}}(-e)\to R^{\beta_{e-3,2}}(-e+1)\to\cdots\to R^{\beta_{0,2}}(-2)\to I_X\to 0\cdots(\ast\ast)$$
where $\beta_{i,2}=(i+1){e+1\choose i+2}-{e \choose i}$ for $0\le i \le e-2$.
Thus, just as the characterization of reduced $2$-regular projective schemes (see \cite{EGHP2}), among all the equidimensional reduced and
connected in codimension $1$ subschemes, it would be very interesting to classify or give geometric descriptions for all $3$-regular,
ACM, and next to minimal degree projective schemes whose Betti table is given by $(\ast\ast)$.
\end{Qu}

\end{document}